\documentclass[11pt]{article}

\usepackage[a4paper,margin=1in]{geometry}
\usepackage[T1]{fontenc}
\usepackage[utf8]{inputenc}
\usepackage{lmodern}
\usepackage{microtype}
\usepackage{amsmath,amssymb,amsfonts,amsthm}
\usepackage{graphicx}
\usepackage{booktabs,makecell,array,tabularx}
\usepackage{adjustbox}
\usepackage{flafter}
\usepackage{placeins}
\usepackage[numbers,sort&compress]{natbib}
\usepackage{xcolor}
\usepackage[colorlinks=true,linkcolor=blue,citecolor=blue,urlcolor=blue]{hyperref}
\providecommand{\Eproj}{}

\newtheorem{theorem}{Theorem}[section]
\newtheorem{proposition}[theorem]{Proposition}
\newtheorem{assumption}[theorem]{Assumption}
\newtheorem{remark}[theorem]{Remark}

\newcommand{\A}{\mathcal A}
\newcommand{\nn}{\mathbf n}
\newcommand{\tg}{\mathbf t}
\newcommand{\curl}{\operatorname{curl}}
\newcommand{\HoneG}{H^1_\Gamma(\Omega)}
\newcommand{\Honeg}{H^1_g(\Omega)}
\newcommand{\ip}[2]{\left\langle #1,#2\right\rangle}
\newcommand{\Cond}{\operatorname{cond}}
\newcommand{\Eapp}{\mathcal E_{\rm app}}
\renewcommand{\Eproj}{\mathcal E_{\rm proj}^{\rm up}}

\makeatletter
\@ifpackageloaded{flafter}{}{\usepackage{flafter}}
\@ifpackageloaded{adjustbox}{}{\usepackage{adjustbox}}
\@ifpackageloaded{tabularx}{}{\usepackage{tabularx}}
\@ifpackageloaded{placeins}{}{\usepackage{placeins}}
\@ifundefined{proof}{\newenvironment{proof}[1][Proof]{\par\noindent\textit{#1.}\ }{\hfill$\square$\par}}{}
\@ifundefined{widetable}{\newenvironment{widetable}{\begin{adjustbox}{max width=\textwidth}}{\end{adjustbox}}}{}
\makeatother

\title{A Natural Decomposition Method for Essential Boundary Conditions in Noninterpolatory Meshfree Spaces}
\author{%
	Jingkai Zhang\textsuperscript{1}
	\and Tiexiang Li\textsuperscript{1,2,*}
	\and Shuo Zhang\textsuperscript{3,4}
}
\date{}
\providecommand{\tablenote}[1]{%
	\par\smallskip
	\begin{minipage}{0.96\textwidth}
		\footnotesize
		\textit{Note.} #1
	\end{minipage}
}


\begin{document}
	\raggedbottom
	
	\maketitle
	
	\begin{center}
		\small
		\textsuperscript{1}School of Mathematics and Shing-Tung Yau Center, Southeast University, Nanjing 210096, China\\
		\textsuperscript{2}Shanghai Institute for Mathematics and Interdisciplinary Sciences (SIMIS), Shanghai 200433, China\\
		\textsuperscript{3}State Key Laboratory of Mathematical Sciences (SKLMS) and State Key Laboratory of Scientific and Engineering Computing (LSEC), Institute of Computational Mathematics and Scientific/Engineering Computing, Academy of Mathematics and Systems Science, Chinese Academy of Sciences, Beijing 100190, China\\
		\textsuperscript{4}School of Mathematical Sciences, University of Chinese Academy of Sciences, Beijing 100049, China\\
		\textsuperscript{*}Corresponding author. Email addresses: \href{mailto:jkzhangmath@seu.edu.cn}{jkzhangmath@seu.edu.cn},
		\href{mailto:txli@seu.edu.cn}{txli@seu.edu.cn},
		\href{mailto:szhang@lsec.cc.ac.cn}{szhang@lsec.cc.ac.cn}
	\end{center}
	
	\begin{abstract}
		This paper develops a natural decomposition method (NDM) for imposing essential
		boundary conditions in noninterpolatory meshfree Galerkin spaces without
		boundary parameter tuning or auxiliary constraint construction. In such spaces,
		algebraic coefficients generally do not coincide with boundary values; hence
		coefficient assignment or nodal boundary prescription is not equivalent to
		imposing the continuous trace required by the variational problem. NDM introduces
		boundary data before discretization through a natural transfer mechanism: a
		source subproblem accounts for the forcing term, a weighted curl correction transfers
		the remaining trace mismatch, and a scalar recovery step reconstructs the
		solution from the corrected weighted gradient. For topologically trivial single
		domains with connected boundary, the reconstructed solution is equivalent, at
		the continuous level, to the solution satisfying the prescribed essential boundary data.
		The discrete analysis separates the approximation defect of the recovery space
		from the upstream transfer error visible to that space. Numerical experiments on
		benchmark problems evaluate the proposed transfer
		mechanism and report the associated
		conditioning, computational cost, and boundary perturbation behavior.
	\end{abstract}
	
	\noindent\textbf{Keywords:}
	Natural decomposition method; Essential boundary conditions; Noninterpolatory meshfree spaces; Natural boundary transfer; Projected diagnostics
	
	\bigskip

	\section{Introduction}\label{sec:intro}
	
	\subsection{Background and Motivation}
	
	Meshfree discretizations approximate the solution from scattered centers or point clouds rather than from a prescribed element mesh. This geometric freedom is attractive in computations involving large deformation, crack growth, moving boundaries, complex domains, and multiphase or interface configurations. Moving least squares, reproducing kernel approximations, radial basis functions (RBFs), and related constructions have accordingly become established tools for partial differential equations~\citep{Babuska2003,Belytschko1996,ChenWangDong2004,LiLuHanLiuSimkins2004,LiuHanLuLiCao2004,Nguyen2008}. For weak form meshfree discretizations, however, the imposition of essential boundary conditions remains a persistent obstacle. The difficulty is most pronounced for noninterpolatory spaces, where the algebraic coefficients are not nodal values and therefore do not directly encode the boundary trace~\citep{ChenWang2000,FernandezMendezHuerta2004,GuntherLiu1998,HillmanLin2021}.
	
	The obstruction is structural. In MLS, RKPM, and many Galerkin meshfree approximations, the basis functions do not generally satisfy the Kronecker $\delta$ property. Prescribing coefficients, or enforcing values at a finite set of boundary points, is therefore not equivalent to prescribing the trace of the trial function as in a conforming finite element space~\citep{ChenWang2000,FernandezMendezHuerta2004,GuntherLiu1998}. This distinction enters the variational formulation directly, because the problem with essential boundary conditions is posed on $H^1_g(\Omega)$ with test functions in $H^1_0(\Omega)$. Hillman and Lin~\citep{HillmanLin2021} showed that nodal enforcement on the boundary need not generate trial and test spaces with the required trace properties; in the absence of a weak Kronecker $\delta$ property, Galerkin orthogonality and best approximation may be lost. Thus an essential boundary treatment must address more than the size of a boundary residual. It must also determine how trace data are made compatible with the approximation space and with the interior weak form.
	
	One approach is to alter the approximation space near the boundary so that interpolation, weak Kronecker behavior, or boundary admissibility is recovered~\citep{HuertaFernandezMendezLiu2004,KrongauzBelytschko1996}. Admissible approximations, reproducing kernels with nodal interpolation properties, almost everywhere partition of unity constructions, and conforming window functions are representative examples~\citep{ChenEtAl2003,GoszLiu1996,KoesterChen2019,OhJeong2009}. These methods attack the mismatch at the level of the trial space and can support strong, or approximately strong, boundary enforcement. Their practical cost is the additional design required near the boundary. Local geometry, support selection, node distribution, and interface layout may all affect the construction, especially for nonconvex domains or geometrically complicated interfaces~\citep{KoesterChen2019,OhJeong2009}.
	
	A second approach leaves the space unchanged and modifies the weak formulation or adds auxiliary constraints. Lagrange multiplier, penalty, Nitsche, and consistent weak form corrections fall into this class~\citep{BelytschkoLuGu1994,FernandezMendezHuerta2004,HillmanLin2021,Nitsche1971,ZhuAtluri1998}. Lagrange multipliers introduce a saddle point problem whose stability depends on the primal and multiplier spaces~\citep{Babuska1973,BelytschkoLuGu1994,Brezzi1974,HuertaFernandezMendez2000}. Penalty methods are simple, but a small parameter enforces the trace weakly and a large parameter can lead to severe conditioning problems~\citep{ChoSongChoi2008,ZhuAtluri1998}. Nitsche formulations retain consistency without multiplier unknowns, yet require stabilization. In meshfree spaces this stabilization can depend on node distribution, support radius, approximation order, kernel metric, local geometry, and material coefficients~\citep{CostaPimentaWriggers2016,FernandezMendezHuerta2004,GriebelSchweitzer2003,HillmanLin2021}. Spatially varying stabilization, local generalized eigenvalue estimates, and variational multiscale boundary treatments reduce reliance on global empirical parameters but retain method-dependent boundary choices~\citep{GroeneveldHillman2024,Hughes1995,JimenezRecioSchweitzer2024}. Thus a boundary treatment should be judged not only by trace residuals, but also by parameter selection, auxiliary unknowns, conditioning, and the transfer of boundary data into the interior solution.
	
	The issue just described is specific to weak form noninterpolatory discretizations. In RBF collocation, RBF finite difference schemes, and related strong form methods, boundary conditions can often be inserted directly into the algebraic equations. Such methods have their own stability issues, including biased stencils near the boundary, irregular boundary nodes, quadrature or weight choices, and shape parameters~\citep{FornbergFlyer2015,LeBorneLeinen2023,NetuzhylovEtAl2007}. They do not, however, face the same variational admissibility question. The present work is concerned with the weak form setting, where consistency is tied to the trace properties of the trial and test spaces.
	
	These observations motivate the numerical design used below. Fern\'andez-M\'endez and Huerta~\citep{FernandezMendezHuerta2004} compared multiplier, penalty, Nitsche, and finite element coupling strategies in a Galerkin meshfree framework, showing that boundary treatment affects accuracy, residual control, and conditioning. Hillman and Lin~\citep{HillmanLin2021} clarified that boundary nodal enforcement does not generally construct the trace admissible trial space required by the variational problem, while Groeneveld and Hillman~\citep{GroeneveldHillman2024} developed a meshfree variational multiscale treatment to reduce reliance on a global penalty parameter. The numerical experiments below therefore compare boundary treatment mechanisms in the same noninterpolatory MQ RBF space. In this setting, NDM converts the essential boundary constraint into a source solve, a tangential curl correction, and a scalar recovery step before the Galerkin discretization is applied.

	\paragraph{Notation.}
	The notation is fixed as follows. The symbol $(\cdot,\cdot)_D$ denotes the $L^2(D)$ inner product, and $\langle\cdot,\cdot\rangle_\Sigma$ denotes the duality pairing on a boundary or interface $\Sigma$. The outward unit normal on $\Gamma$ is $\nn$, and $\nn_0$ is the interface normal directed from $\Omega_1$ to $\Omega_2$. Throughout the paper, $\A$ denotes the symmetric positive definite square root of the physical diffusion tensor, so the tensor in the elliptic operator is $\A^2=\A^\top\A$. The symbols $\kappa_1$ and $\kappa_2$ are reserved for the interface solution and flux jumps.
	
	\subsection{Overview and Outline}
	\label{subsec:contributions_outline}
	
	The construction is motivated by the planar natural decomposition of~\citep{YuZhang2025}. In two dimensions, the boundary mismatch has one tangential component and the correction field can be generated by a scalar potential. \hyperref[app:2d]{Appendix~\ref*{app:2d}} recalls this planar formulation. Here the planar theory serves as the prototype for the three dimensional construction. The essential three dimensional change is the replacement of the scalar correction by an $H(\curl)$ vector potential, because a surface or interface mismatch has two independent tangential components.
	
	The paper is organized around the source, curl, and recovery mechanism
	\[
	\begin{gathered}
		\text{Essential boundary data }(f,g)
		\Longrightarrow
		\text{source field }\widetilde u
		\Longrightarrow
		\text{curl correction }\A^{-1}\curl\boldsymbol\phi\\
		\Longrightarrow
		\text{recovered scalar }u^\star .
	\end{gathered}
	\]
	
	The source solve accounts for the forcing term, the curl correction transfers the remaining boundary mismatch, and the scalar recovery step reconstructs a field from the corrected weighted gradient. Since this mechanism is formulated before discretization, it provides a direct boundary treatment for noninterpolatory meshfree Galerkin spaces.
	
	The formulation developed here makes four contributions. First, it introduces a natural decomposition method for essential boundary conditions in noninterpolatory meshfree Galerkin spaces, with boundary data transferred before discretization and without boundary parameter tuning. Second, for topologically trivial single domains with connected boundary, it proves the continuous source, curl, and recovery reconstruction and identifies the $H(\curl)$ vector potential that carries the two tangential components of a surface mismatch. Third, the transfer formulation is extended to interface settings through broken $H(\curl)$ spaces and mean closures. Fourth, it supplies the projected error lens used in the numerical study: the final recovery error is separated into recovery space approximation and the upstream error component visible to that recovery space, with a sequential accounting estimate recording approximation, quadrature, stability, and algebraic residual terms.
	
	The rest of the paper follows this mechanism. Section~\ref{sec:3d-natural} gives the continuous transfer construction, the $H(\curl)$ correction, the single domain equivalence proof, and the broken interface formulation. Section~\ref{sec:exp} implements the method in a global MQ RBF Galerkin setting and uses projected diagnostics to examine accuracy, transfer behavior, boundary treatment comparisons, and perturbation response. Section~\ref{sec:conclusion} summarizes the boundary transfer principle. \hyperref[app:2d]{Appendix~\ref*{app:2d}} records the planar prototype, \hyperref[app:discrete_stability]{Appendix~\ref{app:discrete_stability}} gives the detailed sequential error accounting estimate, and \hyperref[app:validation_scale_cost]{Appendix~\ref{app:validation_scale_cost}} reports the computational cost of the dense global RBF realization.

	\section{Continuous Natural Decomposition in Three Dimensions}
	\label{sec:3d-natural}

	This section establishes the continuous source, curl, and recovery mechanism before any RBF space or quadrature rule is introduced. For a problem with essential boundary data, let $u$ denote the solution. The construction first separates a natural source field $\widetilde u$. The remaining information is the boundary mismatch
	\[
	\rho=g-T\widetilde u ,
	\]
	which determines the missing part of the weighted gradient rather than a new set of boundary coefficients. The central point is that, in the topologically trivial single domain setting with connected boundary, this missing component lies in the weighted curl range $\A^{-1}\curl H(\curl;\Omega)$. Thus it can be written as $\A^{-1}\curl\boldsymbol\phi$ for an $H(\curl)$ vector potential, and the scalar solution is then recovered from the corrected weighted gradient after fixing the boundary mean.
	
	This range statement is the three dimensional core of NDM. It replaces the scalar potential used in the planar natural decomposition of~\citet{YuZhang2025}, because a surface mismatch has two independent tangential components. Section~\ref{subsec:method_one_page} isolates the trace obstruction and states the transfer formulation. Section~\ref{subsec:3d-unified} formulates the three continuous subproblems. Section~\ref{subsec:transfer_mechanism} proves the weighted curl representation and continuous equivalence. Section~\ref{subsec:3d-interface} formulates the corresponding broken interface transfer setting and mean closures used in the numerical tests.

	\subsection{Boundary Admissibility Mismatch and Natural Transfer Route}
	\label{subsec:method_one_page}
	\label{subsec:admissibility_obstruction}
	
	The original formulation with essential boundary data seeks
	\[
	u\in H^1_g(\Omega):=\{w\in H^1(\Omega):Tw=g\},
	\qquad
	v\in H^1_0(\Omega)=\ker T ,
	\]
	where $T$ is the trace operator. The boundary condition is therefore a constraint on the continuous trace. It is not a prescription of algebraic coefficients or of values at a finite set of boundary points.
	
	Let $V_h$ be a noninterpolatory meshfree space generated, for example, by RBFs, MLS, or RKPM. A typical trial function has the form
	\[
	v_h(x)=\sum_{j=1}^{N} d_j\Phi_j(x),
	\]
	where the coefficients $d_j$ are not generally the nodal values $v_h(x_j)$. Even after a cardinal transformation with $L_j(x_i)=\delta_{ij}$, imposing zero values at boundary nodes gives only
	\[
	V^{\rm nodal}_{h,0}
	:=
	\{v_h\in V_h:\ v_h(x_i)=0,\ x_i\in\Gamma_h\},
	\]
	which need not coincide with $V_h\cap H^1_0(\Omega)$. In particular,
	\[
	v_h(x_i)=0,\quad x_i\in\Gamma_h
	\quad\not\Rightarrow\quad
	v_h|_\Gamma=0 .
	\]
	
	This is the point at which the trace mismatch enters the weak form. For a sufficiently smooth $v_h\in V^{\rm nodal}_{h,0}$ and exact solution $u$, integration by parts gives
	\[
	(\A\nabla u,\A\nabla v_h)_\Omega-(f,v_h)_\Omega
	=
	\langle \A^2\nabla u\cdot n,\ v_h\rangle_\Gamma .
	\]
	
	The boundary term vanishes for $v_h\in H^1_0(\Omega)$, but nodal cancellation on $\Gamma_h$ does not generally imply this trace condition. The obstruction is therefore an admissibility obstruction, not merely a pointwise boundary residual.
	
	NDM avoids this obstruction by transferring the trace data before the meshfree space is chosen. Starting from the forcing term $f$ and boundary data $g$, it computes a natural source field $\widetilde u$ and forms the residual trace data
	\[
	\rho=g-T\widetilde u .
	\]
	
	The mismatch $\rho$ is not imposed as boundary coefficients. Instead, its tangential derivative defines the natural datum for an $H(\curl)$ correction. The induced weighted correction field is
	\[
	\mathcal R_{\A}\boldsymbol\phi
	:=
	\A^{-1}(\nabla\times\boldsymbol\phi).
	\]
	
	Finally, a scalar field $u^\star$ is recovered from the corrected weighted gradient, with the mean fixed by the boundary data. Schematically,
	\[
	\boxed{
		(f,g)
		\longrightarrow
		\widetilde u
		\longrightarrow
		\mathcal R_{\A}\boldsymbol\phi
		=
		\A^{-1}(\nabla\times\boldsymbol\phi)
		\longrightarrow
		u^\star .
	}
	\]
	
	\subsection{Single Domain Formulation}
	\label{subsec:3d-unified}
	
	We state the continuous NDM for a single domain variable coefficient problem. Let $\Omega\subset\mathbb R^3$ be a bounded Lipschitz domain, let $\Gamma=\partial\Omega$, and let $\nn$ be the outward unit normal. Define
	\[
	\HoneG
	:=
	\left\{
	v\in H^1(\Omega):\int_\Gamma v\,dS=0
	\right\},
	\qquad
	H(\curl;\Omega)
	:=
	\left\{
	\boldsymbol\psi\in L^2(\Omega)^3:
	\nabla\times\boldsymbol\psi\in L^2(\Omega)^3
	\right\}.
	\]
	
	For a sufficiently smooth vector field $\boldsymbol\psi$, set
	\[
	\boldsymbol\psi_t
	:=
	\nn\times(\boldsymbol\psi\times\nn).
	\]
	
	If $\widetilde u$ is the auxiliary field obtained from the first natural subproblem, the boundary mismatch is
	\begin{equation}
		\label{eq:rho_boundary_mismatch}
		\rho:=g-T\widetilde u .
	\end{equation}
	
	For smooth data, its rotated tangential derivative is
	\begin{equation}
		\label{eq:3d-rotated-tan-data}
		\mathbf q
		:=
		\nn\times\nabla_\Gamma\rho
		=
		\nn\times\nabla_\Gamma(g-\widetilde u).
	\end{equation}
	
	This is the three dimensional counterpart of the two dimensional scalar tangential derivative $\partial_{\tg}(g-\widetilde u|_\Gamma)$. The passage from one tangential component to two tangential components is the reason that the scalar potential used in the planar construction is replaced here by an $H(\curl)$ vector potential.
	
	For general $\rho\in H^{1/2}(\Gamma)$, the surface expression $\nn\times\nabla_\Gamma\rho$ need not be a classical $L^2$ tangential field. We therefore define the corresponding tangential mismatch functional by lifting. Choose any $E\rho\in H^1(\Omega)$ satisfying $T(E\rho)=\rho$ and set
	\begin{equation}
		\label{eq:curl_rhs_lifting}
		\mathcal F_\rho(\boldsymbol\psi)
		:=
		-(\nabla E\rho,\nabla\times\boldsymbol\psi)_\Omega,
		\qquad
		\boldsymbol\psi\in H(\curl;\Omega).
	\end{equation}
	
	This definition is not a pointwise replacement of the surface gradient. It is the weak functional whose smooth counterpart is the boundary pairing in \eqref{eq:curl_rhs_surface_representation}.
	
	The following proposition records the compatibility property needed for the semidefinite curl curl correction.
	
	\begin{proposition}
		\label{prop:curl_rhs_compatibility}
		Let $\rho\in H^{1/2}(\Gamma)$ and define $\mathcal F_\rho$ by \eqref{eq:curl_rhs_lifting}. Then $\mathcal F_\rho$ is independent of the chosen lifting. Moreover, for any fixed lifting $E\rho\in H^1(\Omega)$,
		\begin{equation}
			|\mathcal F_\rho(\boldsymbol\psi)|
			\le
			C_\A\|E\rho\|_{H^1(\Omega)}
			\|\A^{-1}(\nabla\times\boldsymbol\psi)\|_{L^2(\Omega)},
			\qquad
			\forall \boldsymbol\psi\in H(\curl;\Omega),
			\label{eq:curl_rhs_quotient_bound}
		\end{equation}
		where $C_\A$ depends only on the $L^\infty$ bound of $\A$. Hence $\mathcal F_\rho$ descends to a continuous functional on the quotient space obtained by identifying vector potentials with the same induced correction field $\A^{-1}\curl\boldsymbol\psi$. In particular,
		\begin{equation}
			\mathcal F_\rho(\boldsymbol\psi)=0
			\qquad
			\forall\boldsymbol\psi\in\ker(\curl).
			\label{eq:curl_rhs_kernel_compatibility}
		\end{equation}
		
		If $\rho$, $\Gamma$, and $\boldsymbol\psi$ are sufficiently smooth, then
		\begin{equation}
			\mathcal F_\rho(\boldsymbol\psi)
			=
			\left\langle
			\nn\times\nabla_\Gamma\rho,
			\boldsymbol\psi_t
			\right\rangle_\Gamma .
			\label{eq:curl_rhs_surface_representation}
		\end{equation}
		
		Thus the right hand side of the curl correction is well defined on the quotient by curl free potentials. In the range setting used in Proposition~\ref{prop:checkable_hodge_range}, this quotient functional is represented by an actual induced correction field.
	\end{proposition}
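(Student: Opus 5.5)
The plan has four steps: prove lifting independence, derive the bound, read off kernel compatibility, and establish the smooth boundary formula. The common tool is the integration by parts identity for a gradient against a curl.

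\textbf{Lifting independence.} For $w\in H^1_0(\Omega)$ and $\boldsymbol\psi\in H(\curl;\Omega)$ we have $(\nabla w,\nabla\times\boldsymbol\psi)_\Omega=0$. I would verify this first for $w\in C_c^\infty(\Omega)$ and $\boldsymbol\psi\in C^\infty(\overline\Omega)^3$. In that case $\nabla w\cdot\nabla\times\boldsymbol\psi=\nabla\cdot(\boldsymbol\psi\times\nabla w)$, because $\nabla\cdot(\boldsymbol\psi\times\nabla w)=\nabla w\cdot\nabla\times\boldsymbol\psi-\boldsymbol\psi\cdot\nabla\times\nabla w$ and the last term vanishes. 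The divergence of a compactly supported field integrates to zero. (Equivalently, it is the distributional statement $\nabla\cdot\nabla\times=0$ tested against $w$.) The identity then extends to all of $H^1_0(\Omega)\times H(\curl;\Omega)$ by density of $C_c^\infty(\Omega)$ in $H^1_0(\Omega)$ and continuity in both arguments. If $E_1\rho$ and $E_2\rho$ are two liftings, their difference lies in $\ker T=H^1_0(\Omega)$, so $\mathcal F_\rho$ is independent of the lifting.

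\textbf{The bound \eqref{eq:curl_rhs_quotient_bound}.} I would insert $\A\A^{-1}$ and use Cauchy--Schwarz:
\[
|(\nabla E\rho,\nabla\times\boldsymbol\psi)_\Omega|=|(\A\nabla E\rho,\A^{-1}\nabla\times\boldsymbol\psi)_\Omega|\le\|\A\|_{L^\infty}\|\nabla E\rho\|_{L^2}\|\A^{-1}\nabla\times\boldsymbol\psi\|_{L^2}.
\]
This uses the symmetry of $\A$, and it gives $C_\A=\|\A\|_{L^\infty}$. The bound immediately shows that $\mathcal F_\rho(\boldsymbol\psi)$ depends only on $\A^{-1}\curl\boldsymbol\psi$. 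Hence $\mathcal F_\rho$ descends to a bounded functional on the quotient, equipped with the seminorm $\|\A^{-1}\curl\cdot\|_{L^2}$. Kernel compatibility \eqref{eq:curl_rhs_kernel_compatibility} is the special case $\curl\boldsymbol\psi=0$, so it is immediate.

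\textbf{Smooth representation \eqref{eq:curl_rhs_surface_representation}.} With smooth data I would take a smooth lifting $\phi=E\rho$. Applying the divergence theorem to the identity above gives
\[
(\nabla\phi,\nabla\times\boldsymbol\psi)_\Omega=\int_\Gamma(\boldsymbol\psi\times\nabla\phi)\cdot\nn\,dS .
\]
The triple product equals $-(\nn\times\nabla\phi)\cdot\boldsymbol\psi$. On $\Gamma$ only the tangential part of $\nabla\phi$ survives in $\nn\times\nabla\phi$, and that part is $\nabla_\Gamma\rho$. Also $\nn\times\nabla_\Gamma\rho$ is tangential, so dotting with $\boldsymbol\psi$ is the same as dotting with $\boldsymbol\psi_t$. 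Therefore
\[
(\nabla\phi,\nabla\times\boldsymbol\psi)_\Omega=-\langle\nn\times\nabla_\Gamma\rho,\boldsymbol\psi_t\rangle_\Gamma ,
\]
and the minus sign in \eqref{eq:curl_rhs_lifting} produces exactly \eqref{eq:curl_rhs_surface_representation}.

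\textbf{Main obstacle.} The hardest step is the density argument underpinning lifting independence. It requires the pairing $(\nabla w,\nabla\times\boldsymbol\psi)$ to be continuous in $w$ with respect to the $H^1_0$ norm; it is, by Cauchy--Schwarz with the $L^2$ norm of $\nabla\times\boldsymbol\psi$. It also requires density of $C_c^\infty(\Omega)$ in $H^1_0(\Omega)$, which is standard on Lipschitz domains. For general $\boldsymbol\psi$ no tangential trace is needed, because $w$ has compact support; this is why the lifting definition avoids the surface gradient entirely. The final sentence about representation by an induced correction field I would defer to Proposition~\ref{prop:checkable_hodge_range}. Here I would only note the Riesz representation on the closure of $\A^{-1}\curl H(\curl;\Omega)$ in $L^2$.
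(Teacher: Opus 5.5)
Your proposal is correct and follows the paper's proof essentially step by step. Both arguments get lifting independence from $E_1\rho-E_2\rho\in H^1_0(\Omega)$, density of $C_c^\infty(\Omega)$, and the distributional identity $\nabla\times\nabla w=0$; your parenthetical version is the one the paper uses, and it avoids needing density of smooth fields in $H(\curl;\Omega)$. Both then obtain the bound by Cauchy--Schwarz with $C_\A=\|\A\|_{L^\infty}$, read kernel compatibility off the case $\nabla\times\boldsymbol\psi=0$, and derive the surface formula from Green's formula, where only $\nabla_\Gamma\rho$ survives against the tangential field. Your triple-product sign bookkeeping agrees with the paper's identity $-(\nn\times\boldsymbol\psi)\cdot\nabla_\Gamma\rho=(\nn\times\nabla_\Gamma\rho)\cdot\boldsymbol\psi_t$, and deferring the final representation claim to Proposition~\ref{prop:checkable_hodge_range} matches how the paper treats it.
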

	
	\begin{proof}
		Let $E_1\rho$ and $E_2\rho$ be two $H^1$ liftings of $\rho$. Then $z:=E_1\rho-E_2\rho\in H^1_0(\Omega)$. By density of $C_c^\infty(\Omega)$ in $H^1_0(\Omega)$ and the distributional identity $\nabla\times\nabla z=0$,
		\[
		(\nabla z,\nabla\times\boldsymbol\psi)_\Omega=0,
		\qquad
		\forall\boldsymbol\psi\in H(\curl;\Omega).
		\]
		
		Thus \eqref{eq:curl_rhs_lifting} does not depend on the lifting. The bound \eqref{eq:curl_rhs_quotient_bound} follows from Cauchy's inequality and the uniform boundedness of $\A$:
		\[
		|\mathcal F_\rho(\boldsymbol\psi)|
		\le
		\|\nabla E\rho\|_{L^2(\Omega)}
		\|\nabla\times\boldsymbol\psi\|_{L^2(\Omega)}
		\le
		C_\A\|E\rho\|_{H^1(\Omega)}
		\|\A^{-1}(\nabla\times\boldsymbol\psi)\|_{L^2(\Omega)}.
		\]
		
		If $\boldsymbol\psi\in\ker(\curl)$, then \eqref{eq:curl_rhs_kernel_compatibility} follows immediately. For smooth functions, Green's formula and $\nabla\times\nabla(E\rho)=0$ give
		\[
		(\nabla E\rho,\nabla\times\boldsymbol\psi)_\Omega
		=
		\left\langle
		\nn\times\boldsymbol\psi,
		\nabla E\rho
		\right\rangle_\Gamma .
		\]
		
		Since $\nn\times\boldsymbol\psi$ is tangential, only the tangential part of $\nabla E\rho$ contributes on $\Gamma$. Because $T(E\rho)=\rho$, this tangential part is $\nabla_\Gamma\rho$, and therefore
		\[
		(\nabla E\rho,\nabla\times\boldsymbol\psi)_\Omega
		=
		\left\langle
		\nn\times\boldsymbol\psi,
		\nabla_\Gamma\rho
		\right\rangle_\Gamma .
		\]
		
		Using
		\[
		-\left(\nn\times\boldsymbol\psi\right)\cdot\nabla_\Gamma\rho
		=
		\left(\nn\times\nabla_\Gamma\rho\right)\cdot\boldsymbol\psi_t,
		\]
		we obtain \eqref{eq:curl_rhs_surface_representation}.
	\end{proof}
	
	Consider a variable coefficient problem with essential boundary data
	\begin{equation}
		\label{eq:3d-unified-strong}
		-\operatorname{div}(\A^2\nabla u)=f \quad \text{in }\Omega,
		\qquad
		u=g \quad \text{on }\Gamma,
	\end{equation}
	where $\A(x)$ is the symmetric square root of the physical diffusion tensor, so that the physical tensor is $\A^2=\A^\top\A$. In the derivation below, $\A=\A^\top$ is assumed uniformly positive definite and uniformly bounded. The Poisson case corresponds to $\A=I$. The reconstruction consists of three natural subproblems.
	
	\medskip
	\noindent\textbf{Step 1: source to auxiliary scalar field.}
	Find $\widetilde u\in\HoneG$ such that
	\begin{equation}
		\label{eq:3d-unified-s1}
		(\A\nabla\widetilde u,\A\nabla v)
		=
		\ip{f}{v}_{(\HoneG)'\times\HoneG},
		\qquad
		\forall v\in\HoneG .
	\end{equation}
	
	This Neumann type problem carries the forcing term, and the zero boundary mean fixes the additive constant.
	
	\medskip
	\noindent\textbf{Step 2: boundary mismatch to curl correction.}
	Find a vector potential $\boldsymbol\phi\in H(\curl;\Omega)$ such that
	\begin{equation}
		\label{eq:3d-unified-s2}
		(\A^{-1}(\nabla\times\boldsymbol\phi),
		\A^{-1}(\nabla\times\boldsymbol\psi))_\Omega
		=
		\mathcal F_{g-T\widetilde u}(\boldsymbol\psi),
		\qquad
		\forall \boldsymbol\psi\in H(\curl;\Omega).
	\end{equation}
	
	For smooth data, \eqref{eq:3d-unified-s2} is equivalently written as
	\begin{equation}
		\label{eq:3d-unified-s2-surface}
		(\A^{-1}(\nabla\times\boldsymbol\phi),
		\A^{-1}(\nabla\times\boldsymbol\psi))_\Omega
		=
		\ip{\nn\times\nabla_\Gamma(g-\widetilde u)}{\boldsymbol\psi_t}_{\Gamma},
		\qquad
		\forall \boldsymbol\psi\in H(\curl;\Omega).
	\end{equation}
	
	The induced correction field is
	\[
	\mathcal R_\A\boldsymbol\phi
	:=
	\A^{-1}(\nabla\times\boldsymbol\phi).
	\]
	
	\medskip
	\noindent\textbf{Step 3: projected weighted gradient recovery.}
	Find $u_c\in H^1(\Omega)$ such that
	\begin{equation}
		\label{eq:3d-unified-s3}
		(\A\nabla u_c,\A\nabla v)_\Omega
		=
		(\A\nabla\widetilde u-\mathcal R_\A\boldsymbol\phi,\A\nabla v)_\Omega,
		\qquad
		\forall v\in H^1(\Omega).
	\end{equation}
	
	The additive constant is fixed by the boundary mean:
	\begin{equation}
		\label{eq:3d-unified-recover}
		u^\star=u_c-C,
		\qquad
		C
		=
		\frac{1}{|\Gamma|}
		\int_\Gamma (u_c-g)\,dS .
	\end{equation}
	
	Thus the recovered weighted gradient is built from
	\[
	\A\nabla\widetilde u-\A^{-1}(\nabla\times\boldsymbol\phi).
	\]
	
	This is the three dimensional weighted gradient counterpart of the two dimensional field $\nabla\widetilde u-\curl\varphi$.
	
	\subsection{Curl Range and Equivalence}
	\label{subsec:transfer_mechanism}
	\label{subsec:on_assumption_H}
	
	The three step formulation above is equivalent to the original problem with essential boundary data when the missing weighted correction field belongs to the curl range. This subsection proves that this is the case in the standard topologically trivial single domain setting. The key observation is that, after the first natural solve, the difference between the auxiliary weighted gradient and the exact weighted gradient is orthogonal to all homogeneous weighted gradients.
	
	Define
	\[
	\mathcal X_\A
	:=
	\left\{
	\mathbf p\in L^2(\Omega)^3:
	(\mathbf p,\A\nabla v)_\Omega=0,
	\forall v\in H^1_0(\Omega)
	\right\}.
	\]
	
	If $u\in\Honeg$ is the solution of the original problem with essential boundary data and $\widetilde u\in H^1_\Gamma(\Omega)$ is the solution of the first natural subproblem, then subtracting the two weak forms gives
	\[
	\A\nabla\widetilde u-\A\nabla u\in \mathcal X_\A .
	\]
	
	The correction step represents this weighted orthogonal complement component by an $H(\curl)$ vector potential. The following assumption records the topological setting in which the representation follows from the standard curl range theorem.
	
	\begin{assumption}
		\label{ass:weighted_helmholtz_closure}
		Assume that the following conditions hold.
		\begin{enumerate}
			\renewcommand{\labelenumi}{(H\arabic{enumi})}
			\item The domain $\Omega\subset\mathbb R^3$ is bounded, Lipschitz, and has connected boundary. Moreover, it is topologically trivial in the sense that the standard curl range relation
			\begin{equation}
				\label{eq:curl_range_condition}
				\nabla\times H(\curl;\Omega)
				=
				\left\{
				\mathbf r\in H(\operatorname{div};\Omega):
				\operatorname{div}\mathbf r=0,
				\ \langle \mathbf r\cdot\nn,1\rangle_{\Gamma}=0
				\right\}
			\end{equation}
			holds. A topologically trivial Lipschitz polyhedron with connected boundary is a standard example of this setting; see, for example, \citet{GiraultRaviart1986} and \citet{ArnoldFalkWinther2010} for background on Hodge decompositions, de Rham complexes, and finite element exterior calculus.
			
			\item The coefficient $\A=\A^\top\in L^\infty(\Omega)^{3\times3}$ is uniformly bounded and uniformly positive definite.
			
			\item The data are regular enough for the problem with essential boundary conditions, the first natural subproblem, and the lifting functional \eqref{eq:curl_rhs_lifting} to be well defined. In particular, the scalar mismatch $\rho=g-T\widetilde u$ belongs to $H^{1/2}(\Gamma)$. For smooth or piecewise smooth boundaries, the equivalent surface pairings are interpreted patchwise.
		\end{enumerate}
	\end{assumption}
	
	\begin{proposition}
		\label{prop:checkable_hodge_range}
		Let Assumption~\ref{ass:weighted_helmholtz_closure} hold. Let $u\in H^1_g(\Omega)$ solve the problem with essential boundary conditions and let $\widetilde u\in H^1_\Gamma(\Omega)$ solve the first natural subproblem. Define
		\[
		\mathbf p:=\A\nabla\widetilde u-\A\nabla u .
		\]
		
		Then there exists $\boldsymbol\phi\in H(\curl;\Omega)$ such that
		\[
		\mathbf p=\A^{-1}(\nabla\times\boldsymbol\phi).
		\]
	\end{proposition}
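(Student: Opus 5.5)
The natural candidate is $\mathbf r:=\A\mathbf p=\A^2\nabla\widetilde u-\A^2\nabla u$. Since $\A^{-1}(\nabla\times\boldsymbol\phi)=\mathbf p$ is the same as $\nabla\times\boldsymbol\phi=\A\mathbf p$, it suffices to show that $\mathbf r$ lies in the right-hand side of the curl range relation \eqref{eq:curl_range_condition}. Then (H1) supplies $\boldsymbol\phi\in H(\curl;\Omega)$ with $\nabla\times\boldsymbol\phi=\mathbf r$, and multiplying by $\A^{-1}$ (bounded by (H2)) gives the claim. By (H2), $\mathbf r\in L^2(\Omega)^3$.

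\textbf{Divergence-free.} Take $v\in H^1_0(\Omega)$. For the Dirichlet problem, $(\A\nabla u,\A\nabla v)=\langle f,v\rangle$. For the first natural subproblem, note that $H^1_0(\Omega)\subset\HoneG$, since a function with zero trace has zero boundary mean. Hence \eqref{eq:3d-unified-s1} also holds for $v$, provided the pairing $\langle f,\cdot\rangle$ is the same functional on both spaces; this is what (H3) is meant to guarantee. Subtracting the two identities gives $(\mathbf r,\nabla v)_\Omega=0$ for all $v\in H^1_0(\Omega)$. This is exactly the statement $\mathbf p\in\mathcal X_\A$ recorded above, and it means $\operatorname{div}\mathbf r=0$ in the distributional sense. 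In particular $\mathbf r\in H(\operatorname{div};\Omega)$, so its normal trace $\mathbf r\cdot\nn\in H^{-1/2}(\Gamma)$ is well defined.

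\textbf{Zero flux.} This is the step I expect to be the main obstacle, because it uses more than interior orthogonality. Green's formula in $H(\operatorname{div})$ with $v\equiv 1$ gives
\[
\langle \mathbf r\cdot\nn,1\rangle_\Gamma=(\operatorname{div}\mathbf r,1)_\Omega+(\mathbf r,\nabla 1)_\Omega=0 .
\]
So the flux condition is automatic once $\operatorname{div}\mathbf r=0$ holds in $L^2$ and not merely against $H^1_0$ test functions. The point to check is that no hidden boundary source spoils this. The Neumann solve \eqref{eq:3d-unified-s1} is posed on $\HoneG$, which excludes constants, so the compatibility of $f$ is only enforced modulo constants. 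Testing \eqref{eq:3d-unified-s1} with constants is not available, which is why the argument should go through $\operatorname{div}\mathbf r=0$ and Green's formula rather than through the weak forms directly. I would confirm that (H3) interprets $f$ as an $L^2$ function, or as a functional continuous on $H^1(\Omega)$, so that $\operatorname{div}(\A^2\nabla u)=-f$ and $\operatorname{div}(\A^2\nabla\widetilde u)=-f$ both hold as $L^2$ identities. If $f$ carried a boundary part, I would separately verify that $\widetilde u$ and $u$ see the same total flux $\langle f,1\rangle$.

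\textbf{Conclusion.} With $\mathbf r\in H(\operatorname{div};\Omega)$, $\operatorname{div}\mathbf r=0$ and $\langle\mathbf r\cdot\nn,1\rangle_\Gamma=0$, the range relation \eqref{eq:curl_range_condition} holds by (H1); connectedness of $\Gamma$ is what makes a single flux condition sufficient. It therefore yields $\boldsymbol\phi$ with $\nabla\times\boldsymbol\phi=\A\mathbf p$, i.e. $\mathbf p=\A^{-1}(\nabla\times\boldsymbol\phi)$.
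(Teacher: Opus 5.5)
Your proposal is correct and follows the paper's proof essentially step for step. You set $\mathbf r=\A\mathbf p$ and obtain $\operatorname{div}\mathbf r=0$ by subtracting the weak forms on $H^1_0(\Omega)\subset\HoneG$. You then get zero flux from the $H(\operatorname{div})$ Green formula with $v\equiv 1$ and invoke the curl range relation \eqref{eq:curl_range_condition}.

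The extra checks you flag in the flux step are unnecessary. A vanishing distributional divergence already means $\operatorname{div}\mathbf r=0\in L^2(\Omega)$, so Green's formula with $v\equiv1$ applies directly to the difference $\mathbf r$. You therefore need no separate $L^2$ identities for $u$ and $\widetilde u$, and no hypothesis excluding a boundary part of $f$.
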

	
	\begin{proof}
		Subtracting the weak form of the original problem with essential boundary data from Subproblem~\eqref{eq:3d-unified-s1} gives
		\[
		(\A\nabla\widetilde u-\A\nabla u,\A\nabla v)_\Omega=0,
		\qquad
		\forall v\in H^1_0(\Omega),
		\]
		and hence $\mathbf p\in\mathcal X_\A$. Set $\mathbf r:=\A\mathbf p$. Since $\A=\A^\top$,
		\[
		(\mathbf r,\nabla v)_\Omega
		=
		(\A\mathbf p,\nabla v)_\Omega
		=
		(\mathbf p,\A\nabla v)_\Omega
		=0,
		\qquad
		\forall v\in H^1_0(\Omega).
		\]
		
		Thus $\operatorname{div}\mathbf r=0$ in the distributional sense. Since $\mathbf r\in L^2(\Omega)^3$ and $\operatorname{div}\mathbf r=0\in L^2(\Omega)$, we have $\mathbf r\in H(\operatorname{div};\Omega)$. The connected boundary condition gives the required flux compatibility. Indeed, by the normal trace formula in $H(\operatorname{div};\Omega)$,
		\[
		\langle \mathbf r\cdot\nn,1\rangle_\Gamma
		=
		(\operatorname{div}\mathbf r,1)_\Omega+(\mathbf r,\nabla 1)_\Omega
		=0.
		\]
		
		The curl range relation \eqref{eq:curl_range_condition} therefore gives a vector potential $\boldsymbol\phi\in H(\curl;\Omega)$ satisfying $\nabla\times\boldsymbol\phi=\mathbf r$. Consequently,
		\[
		\mathbf p=\A^{-1}\mathbf r=\A^{-1}(\nabla\times\boldsymbol\phi).
		\]
	\end{proof}

	Proposition~\ref{prop:checkable_hodge_range} also closes the solvability point for Subproblem~\eqref{eq:3d-unified-s2} in the single domain theory. Let $\rho=g-T\widetilde u$. Since $u|_\Gamma=g$, the function $u-\widetilde u$ is an admissible lifting of $\rho$. The vector potential $\boldsymbol\phi_0$ constructed from
	\[
	\A\nabla\widetilde u-\A\nabla u
	=
	\A^{-1}(\nabla\times\boldsymbol\phi_0)
	\]
	represents the quotient functional $\mathcal F_\rho$. Indeed, for every $\boldsymbol\psi\in H(\curl;\Omega)$,
	\[
	\bigl(\A^{-1}(\nabla\times\boldsymbol\phi_0),
	\A^{-1}(\nabla\times\boldsymbol\psi)\bigr)_\Omega
	=
	\mathcal F_\rho(\boldsymbol\psi).
	\]
	
	Thus the second natural subproblem has a solution in the stated range setting. If two vector potentials solve it, their induced correction fields coincide in $L^2(\Omega)^3$. The potential is therefore unique only modulo the curl free kernel, while the field $\mathcal R_\A\boldsymbol\phi$ used in the recovery step is unique.
	
	\begin{proposition}
		\label{prop:3d_equivalence}
		Let Assumption~\ref{ass:weighted_helmholtz_closure} hold. Let $u\in\Honeg$ solve the original problem with essential boundary data
		\[
		-\nabla\cdot(\A^2\nabla u)=f\quad\text{in }\Omega,
		\qquad
		u=g\quad\text{on }\Gamma .
		\]
		
		Let $\widetilde u$, $\boldsymbol\phi$, and $u_c$ satisfy the first, second, and third subproblems in the three dimensional natural decomposition, respectively, and define the final solution $u^\star$ by the boundary mean closure condition. Then
		\[
		u^\star=u .
		\]
	\end{proposition}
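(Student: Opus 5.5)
The argument traces the three subproblems forward and shows that each step reproduces the corresponding piece of the exact solution. I would first pin down the correction field produced by Step 2, then show that the recovered gradient is the exact gradient, and finally check that the boundary mean closure fixes the remaining constant.

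\emph{Identifying $\mathcal R_\A\boldsymbol\phi$.} Let $\rho=g-T\widetilde u$. By Proposition~\ref{prop:checkable_hodge_range} there is $\boldsymbol\phi_0\in H(\curl;\Omega)$ with $\A\nabla\widetilde u-\A\nabla u=\A^{-1}(\nabla\times\boldsymbol\phi_0)$.

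As observed after that proposition, $u-\widetilde u$ is an admissible lifting of $\rho$. By the lifting independence in Proposition~\ref{prop:curl_rhs_compatibility} we may use it in \eqref{eq:curl_rhs_lifting}, which gives
\[
\mathcal F_\rho(\boldsymbol\psi)=-(\nabla(u-\widetilde u),\nabla\times\boldsymbol\psi)_\Omega
=(\A^{-1}(\nabla\times\boldsymbol\phi_0),\A^{-1}(\nabla\times\boldsymbol\psi))_\Omega .
\]
Here I use $\nabla(\widetilde u-u)=\A^{-1}\A^{-1}(\nabla\times\boldsymbol\phi_0)$ and the symmetry of $\A^{-1}$. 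So $\boldsymbol\phi_0$ solves \eqref{eq:3d-unified-s2}.

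Now let $\boldsymbol\phi$ be the given solution of Step 2 and subtract the two equations. This gives $(\A^{-1}\nabla\times(\boldsymbol\phi-\boldsymbol\phi_0),\A^{-1}\nabla\times\boldsymbol\psi)_\Omega=0$ for all $\boldsymbol\psi$. Choosing $\boldsymbol\psi=\boldsymbol\phi-\boldsymbol\phi_0$ yields $\mathcal R_\A\boldsymbol\phi=\mathcal R_\A\boldsymbol\phi_0$ in $L^2(\Omega)^3$. This shows the correction field is unique even though the potential is not.

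\emph{Recovering the gradient.} The right-hand side of Step 3 becomes $\A\nabla\widetilde u-\mathcal R_\A\boldsymbol\phi=\A\nabla u$. Hence \eqref{eq:3d-unified-s3} reads $(\A\nabla(u_c-u),\A\nabla v)_\Omega=0$ for all $v\in H^1(\Omega)$. Taking $v=u_c-u$ and using uniform positive definiteness of $\A$ (H2) gives $\nabla(u_c-u)=0$. Since $\Omega$ is a Lipschitz domain, hence connected, $u_c=u+c$ for some constant $c$.

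\emph{Fixing the constant.} From \eqref{eq:3d-unified-recover} and $Tu=g$,
\[
C=\frac{1}{|\Gamma|}\int_\Gamma (u+c-g)\,dS=c,
\]
so $u^\star=u_c-c=u$.

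The only delicate step is the first one: justifying that the Step 2 solution yields exactly the field from Proposition~\ref{prop:checkable_hodge_range}. This rests on lifting independence, which allows the specific lifting $u-\widetilde u$, together with uniqueness of the induced field modulo the curl kernel. The remaining steps are routine coercivity arguments.
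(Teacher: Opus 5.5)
Your proposal is correct and follows the paper's proof essentially step by step. You obtain $\boldsymbol\phi_0$ from Proposition~\ref{prop:checkable_hodge_range}, use $u-\widetilde u$ as the lifting to show that $\boldsymbol\phi_0$ solves the curl subproblem, get uniqueness of $\mathcal R_\A\boldsymbol\phi$ by testing the difference with itself, and finish with the recovery equation and the boundary mean closure; your version only makes explicit some points the paper leaves implicit (lifting independence, the coercivity step, connectedness of $\Omega$, and the computation $C=c$).
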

	
	\begin{proof}
		By Proposition~\ref{prop:checkable_hodge_range}, for
		\[
		\mathbf p:=\A\nabla\widetilde u-\A\nabla u
		\]
		there exists $\boldsymbol\phi_0\in H(\curl;\Omega)$ such that
		\[
		\mathbf p
		=
		\mathcal R_\A\boldsymbol\phi_0
		=
		\A^{-1}(\nabla\times\boldsymbol\phi_0).
		\]
		
		Let $\rho=g-T\widetilde u$. Since $u|_\Gamma=g$, $u-\widetilde u$ is an admissible lifting of $\rho$. For any $\boldsymbol\psi\in H(\curl;\Omega)$, using $\A=\A^\top$ gives
		\[
		\begin{aligned}
			\bigl(\A^{-1}(\nabla\times\boldsymbol\phi_0),
			\A^{-1}(\nabla\times\boldsymbol\psi)\bigr)_\Omega
			&=
			\bigl(\A\nabla\widetilde u-\A\nabla u,
			\A^{-1}(\nabla\times\boldsymbol\psi)\bigr)_\Omega \\
			&=
			(\nabla\widetilde u-\nabla u,\nabla\times\boldsymbol\psi)_\Omega \\
			&=
			-(\nabla(u-\widetilde u),\nabla\times\boldsymbol\psi)_\Omega \\
			&=
			\mathcal F_\rho(\boldsymbol\psi).
		\end{aligned}
		\]
		
		Thus $\boldsymbol\phi_0$ is an admissible representative for the second natural subproblem. In the smooth case this identity is exactly the surface formula \eqref{eq:3d-unified-s2-surface}. If another vector potential solves the same semidefinite curl curl problem, the induced field $\mathcal R_\A\boldsymbol\phi$ is the same: testing the difference of two solutions by itself gives zero $L^2$ norm of the difference of the induced correction fields. Hence the third step depends only on
		\[
		\mathcal R_\A\boldsymbol\phi
		=
		\A\nabla\widetilde u-\A\nabla u .
		\]
		
		Substitution into the third natural subproblem gives
		\[
		(\A\nabla u_c,\A\nabla v)_\Omega
		=
		(\A\nabla u,\A\nabla v)_\Omega,
		\qquad
		\forall v\in H^1(\Omega).
		\]
		
		Therefore $u_c-u$ is a constant. The boundary mean closure condition fixes this constant, and therefore $u^\star=u$.
	\end{proof}
	
	The proof also identifies the only topological input used in the single domain equivalence: the curl range relation in Assumption~\ref{ass:weighted_helmholtz_closure}.
	
	\begin{remark}
		\label{rem:assumption_H_scope}
		The equivalence result above is a direct single domain statement under Assumption~\ref{ass:weighted_helmholtz_closure}. If the domain has several boundary components or nontrivial cohomology, the curl range may contain additional harmonic compatibility components. If the problem is formulated with internal interfaces, nonzero jumps, or broken regularity, the correction should instead be written in the corresponding broken $H(\curl)$ setting or with an appropriate range projection. These are compatibility ingredients of the functional setting, not penalty parameters, Nitsche stabilization parameters, or multiplier space choices. The discontinuous coefficient, interface, and singular geometry tests in Section~\ref{sec:exp} extend the discrete transfer study beyond the direct single-domain equivalence theorem. For the interface setting, Proposition~\ref{prop:conditional_interface_equivalence} below records the same equivalence mechanism under additional broken range and lifting assumptions.
	\end{remark}
	
	\subsection{Interface Transfer Formulation}
	\label{subsec:3d-interface}
	
	The interface formulation uses the same natural transfer mechanism. The normal flux jump is assigned to the first natural problem, while the solution jump enters the second problem through its tangential derivative on the interface. Thus the interface case applies the same decomposition to exterior boundary and interior interface data in parallel. After the mean closures are specified, Proposition~\ref{prop:conditional_interface_equivalence} states the corresponding interface reconstruction result under the broken range and lifting assumptions needed for this piecewise setting.
	
	Let $\Omega_1$ and $\Omega_2$ be two disjoint open subdomains satisfying
	\[
	\Omega=\Omega_1\cup\Gamma_0\cup\Omega_2,
	\qquad
	\Omega_1\cap\Omega_2=\emptyset,
	\]
	with
	\[
	\Gamma_0=\partial\Omega_1\cap\partial\Omega_2\cap\Omega,
	\qquad
	\Gamma=\partial\Omega .
	\]
	Thus $\Omega_1\cup\Omega_2$ denotes the broken interior on which the differential equation is imposed, while the jump conditions are imposed separately on $\Gamma_0$.
	
	Let $\nn_0$ be the interface normal directed from $\Omega_1$ to $\Omega_2$. We use the jump convention
	\[
	[q]_{\Gamma_0}=q_1|_{\Gamma_0}-q_2|_{\Gamma_0},
	\]
	where the subscripts 1 and 2 denote traces taken from the $\Omega_1$ and $\Omega_2$ sides, respectively. Consider
	\begin{equation}
		\label{eq:3d-interface-strong}
		\begin{cases}
			-\operatorname{div}(\A^2\nabla u)=f & \text{in }\Omega_1\cup\Omega_2, \\
			u=g & \text{on }\Gamma, \\
			[(\A^2\nabla u)\cdot\nn_0]_{\Gamma_0}=\kappa_2, \\
			[u]_{\Gamma_0}=\kappa_1 .
		\end{cases}
	\end{equation}
	
	For discontinuous coefficients or nonzero solution jumps, the curl correction can be understood in the broken space
	\[
	H(\curl;\Omega_1\cup\Omega_2)
	:=
	\{\boldsymbol\psi:
	\boldsymbol\psi_i\in H(\curl;\Omega_i),\ i=1,2\}.
	\]
	
	A globally conforming $H(\curl;\Omega)$ potential requires the corresponding tangential trace compatibility across $\Gamma_0$. The numerical interface tests below use the piecewise interpretation, which is the natural setting for piecewise recovery.
	
	In the first subproblem, the flux jump enters as a natural term:
	\begin{equation}
		\label{eq:3d-interface-s1}
		(\A\nabla\widetilde u,\A\nabla v)_\Omega
		=
		\ip{f}{v-\bar v_\Gamma}
		+
		\ip{\kappa_2}{v-\bar v_\Gamma}_{\Gamma_0},
		\qquad
		\forall v\in H^1(\Omega),
	\end{equation}
	where
	\[
	\bar v_\Gamma
	:=
	\frac{1}{|\Gamma|}
	\int_\Gamma v\,dS .
	\]
	
	In the second subproblem, the exterior boundary mismatch and the interface solution jump jointly generate tangential correction data:
	\[
	\rho_\Gamma=g-T\widetilde u,
	\qquad
	\rho_0=\kappa_1 .
	\]
	
	For smooth data,
	\[
	\mathbf q_\Gamma
	=
	\nn\times\nabla_\Gamma\rho_\Gamma,
	\qquad
	\mathbf q_0
	=
	\nn_0\times\nabla_{\Gamma_0}\rho_0
	=
	\nn_0\times\nabla_{\Gamma_0}\kappa_1 .
	\]
	
	Accordingly, the vector potential satisfies the broken lifting form; for smooth data it becomes
	\begin{equation}
		\label{eq:3d-interface-s2}
		(\A^{-1}(\nabla\times\boldsymbol\phi),
		\A^{-1}(\nabla\times\boldsymbol\psi))_{\Omega_1\cup\Omega_2}
		=
		\ip{\mathbf q_\Gamma}{\boldsymbol\psi_t}_{\Gamma}
		+
		\ip{\mathbf q_0}{\boldsymbol\psi_{t,0}}_{\Gamma_0},
		\qquad
		\forall\boldsymbol\psi\in H(\curl;\Omega_1\cup\Omega_2),
	\end{equation}
	where
	\[
	\boldsymbol\psi_{t,0}
	:=
	\nn_0\times(\boldsymbol\psi\times\nn_0).
	\]
	
	The third subproblem recovers the solution piecewise on the two subdomains:
	\begin{equation}
		\label{eq:3d-interface-s3}
		(\A\nabla u_i^c,\A\nabla v)_{\Omega_i}
		=
		(\A\nabla\widetilde u-\mathcal R_\A\boldsymbol\phi,\A\nabla v)_{\Omega_i},
		\qquad
		\forall v\in H^1(\Omega_i),
		\quad i=1,2 .
	\end{equation}
	
	Each recovered piece is determined only up to an additive constant. This point must be closed explicitly in the interface formulation. Let
	\[
	\Gamma_i:=\partial\Omega_i\cap\Gamma,
	\qquad i=1,2,
	\]
	and set
	\begin{equation}
		\label{eq:3d-interface-piecewise-final}
		u_i^\star=u_i^c-C_i,
		\qquad i=1,2 .
	\end{equation}
	
	The constants $C_1$ and $C_2$ are fixed by one exterior boundary mean condition and one interface jump mean condition:
	\begin{equation}
		\label{eq:3d-interface-closure-weak}
		\sum_{i=1}^2
		\int_{\Gamma_i}(u_i^c-C_i-g)\,dS=0,
		\qquad
		\int_{\Gamma_0}
		\bigl(u_1^c-C_1-u_2^c+C_2-\kappa_1\bigr)\,dS=0 .
	\end{equation}
	
	Equivalently,
	\begin{equation}
		\label{eq:3d-interface-closure-system}
		|\Gamma_1|C_1+|\Gamma_2|C_2
		=
		\sum_{i=1}^2\int_{\Gamma_i}(u_i^c-g)\,dS,
		\qquad
		C_1-C_2
		=
		\frac{1}{|\Gamma_0|}
		\int_{\Gamma_0}(u_1^c-u_2^c-\kappa_1)\,dS .
	\end{equation}
	
	\begingroup
	
	\begin{proposition}
		\label{prop:conditional_interface_equivalence}
		Assume that the interface problem \eqref{eq:3d-interface-strong} has a sufficiently regular solution $u_i$ on each $\Omega_i$, and let $\widetilde u$ solve \eqref{eq:3d-interface-s1}. Suppose, in addition, that the broken lifting in \eqref{eq:3d-interface-s2} represents the exterior trace mismatch $g-T\widetilde u$ and the interface jump $\kappa_1$, and that the corresponding subdomain mismatch fields
		\[
		\mathbf p_i
		:=
		\A_i\nabla\widetilde u_i-\A_i\nabla u_i,
		\qquad
		\A_i:=\A|_{\Omega_i},
		\]
		belong to the broken weighted curl ranges, namely
		\[
		\mathbf p_i
		=
		\A_i^{-1}(\nabla\times\boldsymbol\phi_i),
		\qquad
		\boldsymbol\phi_i\in H(\curl;\Omega_i),
		\qquad i=1,2.
		\]
		Then the piecewise recovery \eqref{eq:3d-interface-s3}, followed by the two mean closures \eqref{eq:3d-interface-closure-system}, reconstructs the interface solution: $u_i^\star=u_i$ in $\Omega_i$, $i=1,2$.
	\end{proposition}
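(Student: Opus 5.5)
The plan mirrors the proof of Proposition~\ref{prop:3d_equivalence}, carried out subdomain by subdomain, followed by a separate argument for the two additive constants.

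\emph{Step 1: the correction field.} By hypothesis, $\mathbf p_i=\A_i^{-1}(\nabla\times\boldsymbol\phi_i)$ with $\boldsymbol\phi_i\in H(\curl;\Omega_i)$. Assemble these into $\boldsymbol\phi_0:=(\boldsymbol\phi_1,\boldsymbol\phi_2)\in H(\curl;\Omega_1\cup\Omega_2)$.

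\emph{Step 2: $\boldsymbol\phi_0$ solves the broken problem \eqref{eq:3d-interface-s2}.} The lifting hypothesis supplies a broken function $w$ with the following properties: its exterior trace is $g-T\widetilde u$, its interface jump is $\kappa_1$, and the right hand side of \eqref{eq:3d-interface-s2} equals $-\sum_i(\nabla w_i,\nabla\times\boldsymbol\psi_i)_{\Omega_i}$.

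The natural candidate is $w_i=u_i-\widetilde u_i$. Its exterior trace is $g-T\widetilde u$, and its interface jump is $[u]-[\widetilde u]=\kappa_1$, since $\widetilde u\in H^1(\Omega)$ has no jump. Repeating the computation in the proof of Proposition~\ref{prop:3d_equivalence} on each $\Omega_i$, and using $\A_i=\A_i^\top$, gives
\[
\sum_i\bigl(\A_i^{-1}\nabla\times\boldsymbol\phi_i,\A_i^{-1}\nabla\times\boldsymbol\psi_i\bigr)_{\Omega_i}
=-\sum_i(\nabla(u_i-\widetilde u_i),\nabla\times\boldsymbol\psi_i)_{\Omega_i}.
\]
In the smooth case, Green's formula on each $\Omega_i$ turns the right hand side into the $\Gamma$ and $\Gamma_0$ pairings of \eqref{eq:3d-interface-s2}. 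On $\Gamma_0$, the tangential gradient of the jump $\kappa_1$ appears with the orientation of $\nn_0$, and the orientation signs must be tracked there. The lifting hypothesis is exactly what licenses this identification; I would invoke it rather than re-derive the trace calculus.

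Uniqueness of the induced field follows as before. If two potentials solve \eqref{eq:3d-interface-s2}, test the difference of the equations with the difference of the potentials. This shows the broken field $\mathcal R_\A\boldsymbol\phi$ is unique, so $\mathcal R_\A\boldsymbol\phi|_{\Omega_i}=\A_i\nabla\widetilde u_i-\A_i\nabla u_i$ for any solution.

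\emph{Step 3: piecewise recovery.} Substituting this field into \eqref{eq:3d-interface-s3} gives
\[
(\A\nabla u_i^c,\A\nabla v)_{\Omega_i}=(\A\nabla u_i,\A\nabla v)_{\Omega_i}\quad\text{for all } v\in H^1(\Omega_i).
\]
Take $v=u_i^c-u_i$ and use uniform positive definiteness. Then $\nabla(u_i^c-u_i)=0$, and since each $\Omega_i$ is connected (implicit in the setting), $u_i^c=u_i+c_i$ for some constant $c_i$.

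\emph{Step 4: the constants (the main point needing care).} Insert $u_i^c=u_i+c_i$ into \eqref{eq:3d-interface-closure-system}.
\begin{itemize}
\item On $\Gamma_i$ we have $u_i=g$, so the first equation becomes $|\Gamma_1|C_1+|\Gamma_2|C_2=|\Gamma_1|c_1+|\Gamma_2|c_2$.
\item On $\Gamma_0$ we have $u_1-u_2=\kappa_1$, so the second becomes $C_1-C_2=c_1-c_2$.
\end{itemize}
This $2\times2$ system has determinant $-(|\Gamma_1|+|\Gamma_2|)=-|\Gamma|\neq0$, so its unique solution is $C_i=c_i$. Hence $u_i^\star=u_i^c-C_i=u_i$.

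The main obstacle is Step~2. There one must check that the broken lifting functional coincides with $-\sum_i(\nabla(u_i-\widetilde u_i),\nabla\times\boldsymbol\psi_i)_{\Omega_i}$, with correct signs on $\Gamma_0$. Without a global $H^1$ lifting, the interface contributions do not cancel; they produce exactly the $\kappa_1$ term. This is why the statement assumes the lifting representation rather than proving it.
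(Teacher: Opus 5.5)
Your proposal is correct and follows the same route as the paper's proof: the assembled potential $(\boldsymbol\phi_1,\boldsymbol\phi_2)$ is accepted as a solution of \eqref{eq:3d-interface-s2} via the assumed broken lifting identity, and piecewise recovery then forces $u_i^c-u_i$ to be a constant on each connected $\Omega_i$. The two closures give a $2\times2$ system with determinant $-(|\Gamma_1|+|\Gamma_2|)\neq0$, so each $C_i$ equals that constant; your explicit uniqueness argument for the induced broken field is a small addition the paper leaves implicit here, but note that your closing claim that the $\Gamma_0$ contributions collapse to the single $\kappa_1$ pairing holds only for test fields with continuous tangential trace across $\Gamma_0$ (for fully broken $\boldsymbol\psi$ they involve the one-sided traces of $u_i-\widetilde u$ separately), which is precisely why the lifting identity must be assumed rather than derived.
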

	
	\begin{proof}
		Under the assumed broken lifting identity, the potential $\boldsymbol\phi=(\boldsymbol\phi_1,\boldsymbol\phi_2)$ is an admissible representative of the interface curl correction in \eqref{eq:3d-interface-s2}. Hence on each subdomain
		\[
		\A\nabla\widetilde u-\mathcal R_\A\boldsymbol\phi
		=
		\A_i\nabla\widetilde u_i-\A_i^{-1}(\nabla\times\boldsymbol\phi_i)
		=
		\A_i\nabla u_i .
		\]
		
		The recovery equation \eqref{eq:3d-interface-s3} therefore gives
		\[
		(\A_i\nabla(u_i^c-u_i),\A_i\nabla v)_{\Omega_i}=0
		\qquad
		\forall v\in H^1(\Omega_i),
		\]
		so $u_i^c-u_i=d_i$ is constant on each connected subdomain. Since $u=g$ on $\Gamma$ and $[u]_{\Gamma_0}=\kappa_1$, the two closure equations reduce to
		\[
		|\Gamma_1|(d_1-C_1)+|\Gamma_2|(d_2-C_2)=0,
		\qquad
		(d_1-C_1)-(d_2-C_2)=0 .
		\]
		
		These equations imply $C_i=d_i$ whenever $|\Gamma_1|+|\Gamma_2|>0$, and thus $u_i^\star=u_i^c-C_i=u_i$ on each subdomain. This proves the claimed piecewise reconstruction result under the stated range and lifting assumptions.
	\end{proof}
	\endgroup
	
	Thus the interface extension keeps the same logic as the single domain formulation: flux type data are placed in the auxiliary scalar solve, tangential boundary mismatch is carried by the curl correction, and the final solution is obtained by piecewise weighted gradient recovery plus the mean closures \eqref{eq:3d-interface-closure-system}.

	\section{Discrete Natural Transfer and Numerical Experiments}
	\label{sec:exp}
	
	This section tests the discrete NDM in a global MQ RBF Galerkin realization. The RBF space serves as a transparent noninterpolatory test bed; the reported quantities include the final weighted gradient error, the projected split \eqref{eq:numerical_error_split}, subproblem conditioning, and the response to prescribed boundary perturbations. Section~\ref{subsec:exp_setup} fixes the discrete transfer setting and diagnostics. Section~\ref{subsec:ndm_accuracy_transfer} tests the planar prototype, the three dimensional vector potential, and interface transfer. Section~\ref{subsec:ebc_mechanism_tests} compares boundary treatment mechanisms in the same noninterpolatory space. Section~\ref{subsec:perturbation_transfer} examines boundary information propagation through the projected recovery step.
	
	The experiments follow the equation level chain \eqref{eq:3d-unified-s1} through \eqref{eq:3d-unified-recover}: source solve, curl transfer, weighted gradient projection, and mean closure. Figure~\ref{fig:ndm_flow} gives the compact visual form of this chain.
	
	\begin{figure}[htbp]
		\centering
		\includegraphics[width=0.86\linewidth]{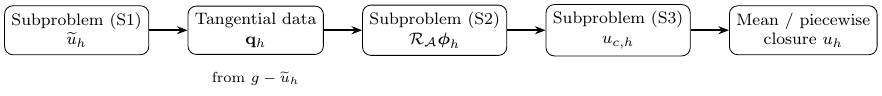}
		\caption{Data flow of the discrete NDM corresponding to the single domain reconstruction \eqref{eq:3d-unified-s1} through \eqref{eq:3d-unified-recover} and the interface reconstruction \eqref{eq:3d-interface-s1} through \eqref{eq:3d-interface-closure-system}. Boundary information enters through tangential natural data and the curl correction field, not through a penalty term, Nitsche stabilization, or multiplier constraint.}
		\label{fig:ndm_flow}
	\end{figure}

	\subsection{Discrete Transfer Setting and Diagnostics}
	\label{subsec:exp_setup}
	
	Let $V_h\subset H^1(\Omega)$ be a scalar approximation space and let $\mathbf V_h:=V_h^3$ be the associated vector valued space. The continuous decomposition does not prescribe a particular meshfree approximation; for the numerical tests we use a global multiquadric (MQ) RBF Galerkin space as a representative noninterpolatory discretization~\citep{Babuska2003,Wendland2004}. The weighted gradient recovery space $G_{\A,h}$ is the image of $V_h$ under the discrete weighted gradient operator from the third subproblem. The symbol $\Pi_{\A,h}$ denotes the $L^2(\Omega)^3$-orthogonal projection onto $G_{\A,h}$.
	
	The discrete NDM is then assembled as three sequential solves:
	\begin{enumerate}
		\renewcommand{\labelenumi}{(S\arabic{enumi})}
		\item Find $\widetilde u_h\in V_h\cap\HoneG$ satisfying \eqref{eq:3d-unified-s1} for all $v_h\in V_h\cap\HoneG$.
		\item Find $\boldsymbol\phi_h\in\mathbf V_h$ satisfying \eqref{eq:3d-unified-s2} for all $\boldsymbol\psi_h\in\mathbf V_h$. The curl curl matrix is symmetric positive semidefinite. Its null space consists of discrete curl free modes, including discrete gradients when the chosen vector space contains the corresponding scalar gradients, and may also include numerical null modes caused by the RBF basis and quadrature. Thus $\boldsymbol\phi_h$ is not unique, but null space components do not contribute to the induced correction field $\mathcal R_\A\boldsymbol\phi_h=\A^{-1}(\nabla\times\boldsymbol\phi_h)$. We solve this semidefinite system with MINRES QLP~\citep{ChoiPaigeSaunders2011}, avoiding an explicit gauge condition.
		\item Find $u_{c,h}\in V_h$ satisfying \eqref{eq:3d-unified-s3} for all $v_h\in V_h$, and apply the boundary mean closure \eqref{eq:3d-unified-recover}.
	\end{enumerate}
	
	Thus Subproblems~(S1), (S2), and~(S3) implement the continuous transfer mechanism at the discrete level, with the last solve projecting the corrected weighted field into the scalar recovery space before the boundary mean closure is applied.
	
	Unless stated otherwise, the approximation uses the global MQ kernel $\psi(r)=\sqrt{r^2+c^2}$. Discontinuous and interface problems use piecewise global spaces. The discrete degrees of freedom are associated with scattered centers or point cloud centers; $N_{\rm side}^2$ and $N_{\rm side}^3$ denote the node counts in two and three dimensions. The ratio $c/h$ is the shape parameter divided by the average point cloud spacing.
	
	The three dimensional experiments are posed on $\Omega=[-1,1]^3$ or on the corresponding interface partitions. The shape parameter ratios in the three subproblems are generally
	\[
	c_1/h=3.0,
	\qquad c_2/h=2.5,
	\qquad c_3/h=2.5 .
	\]
	
	Subproblem 2 uses a vector valued MQ potential space with quadratic polynomial augmentation, and Subproblem 3 uses a scalar MQ recovery space with cubic polynomial gradient augmentation. The semidefinite curl curl system in Subproblem 2 is solved by MINRES QLP with tolerance $10^{-7}$ or $10^{-8}$.
	
	\medskip\noindent
	\textbf{Projected transfer viewpoint.}
	The continuous equivalence proof shows that, before discretization, the missing weighted gradient component $\A\nabla\widetilde u-\A\nabla u$ is exactly represented by the induced curl correction and removed in the final scalar recovery. After discretization, however, the field entering the last recovery step is only the approximate corrected field produced by Subproblems~(S1) and~(S2). Hence the relevant computable quantity is the part of the resulting upstream error that is visible to the finite dimensional weighted gradient recovery space. The projected split below makes this transfer statement computable.
	
	\noindent
	\textbf{Error decomposition diagnostics.}
	Let $u$ denote the exact solution and let
	\[
	w_h:=\A\nabla\widetilde u_h-\mathcal R_\A\boldsymbol\phi_h
	\]
	be the intermediate weighted recovery field produced by Subproblems~(S1) and~(S2). Recall that $G_{\A,h}$ is the weighted gradient recovery space and that $\Pi_{\A,h}$ denotes the $L^2(\Omega)^3$ orthogonal projection onto $G_{\A,h}$. If Subproblem~(S3) is solved exactly with the same inner product used to define this projection, then its Galerkin equation gives the projection relation
	\[
	\A\nabla u_{c,h}=\Pi_{\A,h}w_h .
	\]
	
	When quadrature and algebraic residuals are included, this identity is perturbed by the terms recorded in Appendix~\ref{app:discrete_stability}. The diagnostic below should therefore be read as the projected part of the final recovery error, with those perturbations excluded.
	
	The projected weighted gradient error is decomposed as
	\begin{equation}
		\A\nabla u-\Pi_{\A,h} w_h
		=
		\bigl(I-\Pi_{\A,h}\bigr)(\A\nabla u)
		+
		\Pi_{\A,h}\bigl(\A\nabla u-w_h\bigr).
		\label{eq:numerical_error_split}
	\end{equation}
	
	We use the two diagnostics
	\begin{equation}
		\Eapp
		:=
		\left\|
		\bigl(I-\Pi_{\A,h}\bigr)(\A\nabla u)
		\right\|_{L^2(\Omega)},
		\qquad
		\mathcal E_{\rm proj}^{\rm up}
		:=
		\left\|
		\Pi_{\A,h}\bigl(\A\nabla u-w_h\bigr)
		\right\|_{L^2(\Omega)} .
		\label{eq:formal_diagnostics_def}
	\end{equation}
	
	The first quantity is the approximation defect of the final weighted gradient recovery space. The second quantity is the upstream error visible to that recovery space. It contains the part of the combined Subproblem~(S1) and Subproblem~(S2) error that survives the projection into $G_{\A,h}$. Thus
	\begin{equation}
		\label{eq:total_bound_discrete}
		\|\A\nabla u-\Pi_{\A,h} w_h\|_{L^2(\Omega)}
		\le
		\Eapp+\mathcal E_{\rm proj}^{\rm up} .
	\end{equation}
	
	This is the computable counterpart of the projected transfer viewpoint stated above. Appendix~\ref{app:discrete_stability} gives the corresponding sequential error accounting estimate, where the scalar step errors, curl correction defect, recovery defect, quadrature errors, boundary data approximation errors, and algebraic residuals are kept separately.
	
	\begin{proposition}
		\label{prop:conditional_discrete_consistency_main}
		Consider the topologically trivial single domain setting of Assumption~\ref{ass:weighted_helmholtz_closure}. Let $u_h$ be the final discrete reconstruction produced by Subproblems~(S1), (S2), and~(S3), and let the approximation and perturbation quantities $I_{1,h}$, $I_{2,h}$, $I_{3,h}$, $\Delta_{1,h}$, $\Delta^R_{2,h}$, $\Delta_{3,h}$, and $\varepsilon_{\rm alg,h}$ be those specified in Appendix~\ref{app:discrete_stability}. If the scalar source and recovery steps are stable on their constrained spaces, and if the induced curl correction error is controlled through the terms $I_{2,h}$ and $\Delta^R_{2,h}$ so that
		\begin{equation}
			I_{3,h}+I_{2,h}+C_{1,h}I_{1,h}
			+
			\Delta_{1,h}+\Delta^R_{2,h}+\Delta_{3,h}+\varepsilon_{\rm alg,h}
			\longrightarrow 0,
			\label{eq:main_conditional_consistency_assumption}
		\end{equation}
		then
		\begin{equation}
			\|\A\nabla(u-u_h)\|_{L^2(\Omega)}\longrightarrow 0 .
		\end{equation}
		
		This statement isolates the discrete consistency of the NDM transfer mechanism from MQ RBF specific rate estimates, which depend on approximation, quadrature, conditioning, and algebraic solver bounds.
	\end{proposition}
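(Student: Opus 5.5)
The plan is to build a sequential error accounting estimate that follows the chain (S1)$\to$(S2)$\to$(S3) and compares each discrete object with its continuous counterpart from Proposition~\ref{prop:3d_equivalence}. By that proposition, the exact fields satisfy $\A\nabla u=\A\nabla\widetilde u-\mathcal R_\A\boldsymbol\phi$, where $\widetilde u$ solves \eqref{eq:3d-unified-s1} and $\mathcal R_\A\boldsymbol\phi$ is the unique induced correction field of \eqref{eq:3d-unified-s2}. Set $w:=\A\nabla\widetilde u-\mathcal R_\A\boldsymbol\phi=\A\nabla u$ and $w_h=\A\nabla\widetilde u_h-\mathcal R_\A\boldsymbol\phi_h$. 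Then the final error $\A\nabla(u-u_h)$ splits as in \eqref{eq:numerical_error_split}, plus the perturbation of the relation $\A\nabla u_{c,h}=\Pi_{\A,h}w_h$ caused by quadrature and inexact solves in (S3). Stability of (S3) on its constrained space bounds this perturbation by $\Delta_{3,h}+\varepsilon_{\rm alg,h}$. The approximation defect $\Eapp$ is identified with $I_{3,h}$. Since the gradient is unaffected by the mean closure \eqref{eq:3d-unified-recover}, only weighted gradients need to be tracked.

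Next, I will bound the upstream term. Since $\Pi_{\A,h}$ is an $L^2$-orthogonal projection, $\Eproj\le\|w-w_h\|_{L^2(\Omega)}$, and
\[
\|w-w_h\|\le\|\A\nabla(\widetilde u-\widetilde u_h)\|+\|\mathcal R_\A\boldsymbol\phi-\mathcal R_\A\boldsymbol\phi_h\|.
\]
For the first piece I use stability of the source step on $V_h\cap\HoneG$ together with a Strang-type argument. This gives a quasi-optimality bound of the form $C_{1,h}I_{1,h}+\Delta_{1,h}$, where $I_{1,h}$ is the best-approximation error and $\Delta_{1,h}$ collects the quadrature and data terms.

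The main obstacle is the curl step, because the system is only semidefinite and its right-hand side $\mathcal F_{g-T\widetilde u_h}$ is built from the discrete source field rather than from $\widetilde u$. I would work entirely with induced fields, using the quotient viewpoint of Proposition~\ref{prop:curl_rhs_compatibility}. The discrete problem is then a Galerkin projection of $\mathcal R_\A\boldsymbol\phi$ onto $\mathcal R_\A\mathbf V_h$ in the $L^2$ inner product, so the error is best approximation ($I_{2,h}$) plus consistency. The right-hand side perturbation is handled by linearity of $\rho\mapsto\mathcal F_\rho$ and the bound \eqref{eq:curl_rhs_quotient_bound}. Taking the lifting $\widetilde u-\widetilde u_h$ of $T(\widetilde u-\widetilde u_h)$, which is allowed since lifting independence holds, bounds this perturbation by a constant times the source-step error already controlled above. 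The null modes and the MINRES QLP residual do not affect the induced field beyond $\varepsilon_{\rm alg,h}$, and quadrature effects enter through $\Delta^R_{2,h}$. The delicate point is that the $H^1$ norm of the lifting, not only its gradient, appears in \eqref{eq:curl_rhs_quotient_bound}. I expect the boundary-mean constraint together with a Poincaré inequality on $\HoneG$ to reduce this to the gradient error.

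Finally, summing the three contributions gives
\[
\|\A\nabla(u-u_h)\|\lesssim I_{3,h}+I_{2,h}+C_{1,h}I_{1,h}+\Delta_{1,h}+\Delta^R_{2,h}+\Delta_{3,h}+\varepsilon_{\rm alg,h},
\]
with constants depending only on the bounds of $\A$ and on the stability constants. Hypothesis \eqref{eq:main_conditional_consistency_assumption} then yields convergence to zero.
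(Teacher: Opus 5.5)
Your proposal is correct and follows essentially the paper's route, namely the sequential accounting of Proposition~\ref{prop:app_sequential_accounting}: a Strang bound for (S1), an $L^2$ Galerkin projection of induced fields for (S2) whose right-hand side change is handled by linearity of $\rho\mapsto\mathcal F_\rho$, the projection relation for (S3), and summation. The one refinement in the paper is that, directly from \eqref{eq:curl_rhs_lifting} and $\A=\A^\top$, that right-hand side change equals exactly $-(\A\nabla(\widetilde u-\widetilde u_h),\A^{-1}\nabla\times\boldsymbol\psi_h)_\Omega$. This gives $\mathbf r_h=P_{\mathcal R_h}(\mathbf r-\delta_h)$ and $\A\nabla u-w_h=-(I-P_{\mathcal R_h})(\mathbf r-\delta_h)$, so your Poincar\'e step on $\HoneG$ is unnecessary and the (S1) error is not counted twice by the triangle inequality.
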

	
	The diagnostic $\Eapp$ also exposes polynomial alignment in the recovery space. The recovery space used below contains polynomial gradient augmentation; polynomial manufactured solutions can therefore make $\Eapp$ much smaller than it would be for a generic solution. The numerical tests therefore include a nonpolynomial three-dimensional Poisson case and an additional L-shaped singular benchmark, so that the recovery approximation component remains visible.
	
	Two additional fixed space transfer diagnostics are used in Table~\ref{tab:3d_transfer_compact_revised}. They measure the component of perturbations before recovery observed in the recovered weighted gradient. Let $\delta_{123}$ denote the difference between two fixed space runs in which the first natural subproblem is perturbed while the downstream spaces, quadratures, and evaluation rules are kept fixed. Let $\delta_{23}$ denote the difference between two fixed space runs in which the correction field entering the final recovery is perturbed while the recovery space and evaluation rule are kept fixed. When the denominators are nonzero, we define
	\[
	T_{123}
	:=
	\frac{
		\left\|
		\delta_{123}\bigl(\A\nabla u_{c,h}\bigr)
		\right\|_{L^2(\Omega)}
	}{
		\left\|
		\delta_{123}\bigl(\A\nabla\widetilde u_h\bigr)
		\right\|_{L^2(\Omega)}
	},
	\qquad
	T_{23}
	:=
	\frac{
		\left\|
		\delta_{23}\bigl(\A\nabla u_{c,h}\bigr)
		\right\|_{L^2(\Omega)}
	}{
		\left\|
		\delta_{23}\bigl(\mathcal R_\A\boldsymbol\phi_h\bigr)
		\right\|_{L^2(\Omega)}
	}.
	\]
	
	The quantity $T_{123}$ records the response of the final weighted gradient to a perturbation generated in Subproblem~(S1) and then propagated through Subproblems~(S2) and~(S3). The quantity $T_{23}$ records the response of the final weighted gradient to a perturbation introduced directly in the curl correction field before Subproblem~(S3). These values are fixed space sensitivity ratios that quantify the particular perturbation pathways used in the diagnostic runs.

	\subsection{NDM Accuracy and Transfer Diagnostics}
	\label{subsec:ndm_accuracy_transfer}
	
	The first group of tests isolates the discrete natural decomposition. The two dimensional examples provide a compact implementation check for the planar prototype, while the main evidence below concerns the genuinely three dimensional $H(\curl)$ correction and the projected recovery split.
	
	\subsubsection{Planar Consistency Check}
	\label{subsubsec:2d_consistency}
	
	As a compact check of the planar prototype recalled in \hyperref[app:2d]{Appendix~\ref*{app:2d}}, five two dimensional manufactured examples are solved on $\Omega=[-1,1]^2$:
	\begin{equation}
		-\nabla\cdot\bigl(\A^2\nabla u\bigr)=f
		\quad \text{in }\Omega,
		\qquad
		u=g\quad \text{on }\Gamma .
		\label{eq:2d_model_problem}
	\end{equation}
	
	These tests verify that the planar implementation reproduces the expected behavior before the three-dimensional vector potential correction is examined.
	
	Table~\ref{tab:2d_summary_compact_revised} combines the manufactured settings and the observed error ranges. The errors decrease in all cases. The smooth Poisson and smooth coefficient tests give the larger fitted orders, while nonsmooth coefficients, coefficient jumps, and interface jumps reduce the rates. These trends confirm the planar implementation check and prepare the three dimensional vector potential tests in Section~\ref{subsubsec:3d_vector_transfer}.
	
	\begin{table}[!htbp]
		\centering
		\caption{Compact two dimensional consistency check for the manufactured problem with essential boundary data \eqref{eq:2d_model_problem}.}
		\label{tab:2d_summary_compact_revised}
		\small
		\begin{widetable}
			\begin{tabular}{c p{0.22\textwidth} p{0.25\textwidth} c c}
				\toprule
				Case & Manufactured setting & Purpose of the check & $L^2$ error range, order & $H^1$ error range, order \\
				\midrule
				1
				& $\A=I_2$, $u=x_1^2+x_2^2+\sin(x_1+x_2)$
				& Planar Poisson case
				& $3.790{\times}10^{-3}\to7.438{\times}10^{-4}$, $1.43$
				& $8.539{\times}10^{-2}\to2.847{\times}10^{-2}$, $0.97$ \\
				\addlinespace[2pt]
				2
				& $\A=\operatorname{diag}(1+x_1^2,1)$, $u=e^{\cos(x_1+x_2^2)}$
				& Smooth coefficient case
				& $5.827{\times}10^{-3}\to1.341{\times}10^{-3}$, $1.27$
				& $1.195{\times}10^{-1}\to5.015{\times}10^{-2}$, $0.74$ \\
				\addlinespace[2pt]
				3
				& $\A=\operatorname{diag}(1+x_1^2,1+|x_2|)$, $u=e^{\cos(x_1+|x_2|^3)}$
				& Nonsmooth coefficient check
				& $7.656{\times}10^{-3}\to1.497{\times}10^{-3}$, $1.42$
				& $1.967{\times}10^{-1}\to7.418{\times}10^{-2}$, $0.85$ \\
				\addlinespace[2pt]
				4
				& $\A=\operatorname{diag}(1,\frac43-\frac23\operatorname{sgn}(x_2))$
				& Coefficient jump check
				& $3.709{\times}10^{-2}\to1.439{\times}10^{-2}$, $0.93$
				& $6.547{\times}10^{-1}\to4.223{\times}10^{-1}$, $0.34$ \\
				\addlinespace[2pt]
				5
				& Interface case with $\A|_{\Omega_1}=10I_2$ and variable $\A|_{\Omega_2}$
				& Interface jump check
				& $9.302{\times}10^{-2}\to5.052{\times}10^{-2}$, $0.76$
				& $6.119{\times}10^{-1}\to4.261{\times}10^{-1}$, $0.44$ \\
				\bottomrule
			\end{tabular}
		\end{widetable}
	\end{table}

	\subsubsection{Three Dimensional Vector Potential Tests}
	\label{subsubsec:3d_vector_transfer}
	
	The three dimensional examples examine the discrete behavior of the $H(\curl)$ vector potential correction that replaces the planar scalar potential. Unless stated otherwise, the domain is
	\begin{equation}
		\Omega=[-1,1]^3,
		\qquad
		\Gamma=\partial\Omega ,
		\label{eq:3d_domain}
	\end{equation}
	and the manufactured solutions satisfy
	\begin{equation}
		-\nabla\cdot\bigl(\A^2(x)\nabla u(x)\bigr)=f(x)
		\quad \text{in }\Omega,
		\qquad
		u=g\quad \text{on }\Gamma .
		\label{eq:3d_model_problem}
	\end{equation}
	
	Here $\A$ denotes the symmetric square root of the physical diffusion tensor, so that the physical diffusion tensor is $\A^2$.
	
	Cases 1, 3, and 4 use the common polynomial exact solution
	\begin{equation}
		u_0=x_1^2+x_2^2+x_3^2+\frac{1}{10}x_1x_2x_3 .
		\label{eq:3d_u0}
	\end{equation}
	
	Case 2 uses a nonpolynomial Poisson solution so that the approximation error of the third weighted gradient recovery space is visible:
	\begin{equation}
		u_{\rm np}
		=
		1+
		\sin\frac{\pi x_1}{2}
		\sin\frac{\pi x_2}{2}
		\sin\frac{\pi x_3}{2},
		\qquad
		\A=I_3 .
		\label{eq:3d_nonpoly_solution}
	\end{equation}
	
	For this case,
	\begin{equation}
		f
		=
		\frac{3\pi^2}{4}
		\sin\frac{\pi x_1}{2}
		\sin\frac{\pi x_2}{2}
		\sin\frac{\pi x_3}{2}.
		\label{eq:3d_nonpoly_rhs}
	\end{equation}
	
	This test isolates the role of $\Eapp$: when the exact weighted gradient is not represented by the polynomial enrichment in the recovery space, the recovery space approximation error becomes visible in the final error split.
	
	Case 5 is a three dimensional flat interface jump problem. Let
	\[
	\Gamma_0=\{x_2=0\},\qquad
	\Omega_1=\{x_2<0\},\qquad
	\Omega_2=\{x_2>0\},
	\qquad
	n_0=(0,1,0)^T .
	\]
	
	We choose
	\begin{equation}
		\A=I_3 \quad \text{in } \Omega_1,
		\qquad
		\A=2I_3 \quad \text{in } \Omega_2 ,
		\label{eq:case5_coeff}
	\end{equation}
	so that the physical diffusion coefficient jumps from $1$ to $4$. The exact solution is prescribed piecewise by
	\begin{equation}
		u_2=\frac{x_1^2+x_2^2+x_3^2}{4},
		\qquad
		u_1=u_2+\frac{1+x_1+x_2+x_3}{4}.
		\label{eq:case5_piecewise_solution}
	\end{equation}
	
	Thus, on $\Gamma_0$,
	\begin{equation}
		\kappa_1=[u]_{\Gamma_0}
		=
		\frac{1+x_1+x_3}{4},
		\qquad
		\kappa_2=[\A^2\nabla u\cdot n_0]_{\Gamma_0}
		=
		\frac{1}{4}.
		\label{eq:case5_jump_data}
	\end{equation}
	
	The test includes both a nonzero solution jump and a nonzero normal flux jump. Since $\kappa_1$ varies along the interface, the interface tangential datum in the vector potential correction is also nonzero.
	
	Case 6 is an internal cube interface problem with
	\begin{equation}
		\Omega_1=\left[-\frac12,\frac12\right]^3,
		\qquad
		\Omega_2=\Omega\setminus\overline{\Omega}_1,
		\qquad
		\Gamma_0=\partial\Omega_1 .
		\label{eq:case6_internal_cube}
	\end{equation}
	
	The internal cube interface geometry used in Case~6 is shown in Figure~\ref{fig:internal_cube_domain}.
	
	\begin{figure}[htbp]
		\centering
		\includegraphics[width=0.62\textwidth]{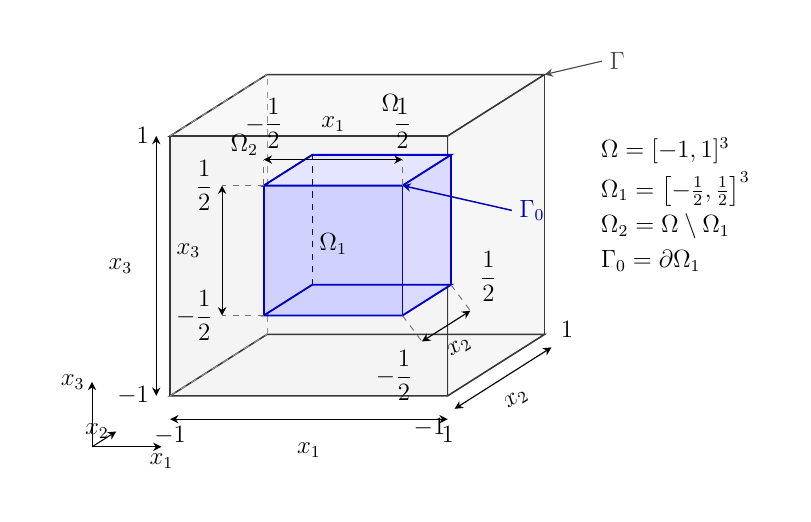}
		\caption{Cube interface geometry for Case~6 in \eqref{eq:case6_internal_cube} to \eqref{eq:case6_kappa2}. The outer domain is $\Omega=[-1,1]^3$, the inner subdomain is $\Omega_1=[-1/2,1/2]^3$, the exterior subdomain is $\Omega_2=\Omega\setminus\overline{\Omega}_1$, and the interface is $\Gamma_0=\partial\Omega_1$.}
		\label{fig:internal_cube_domain}
	\end{figure}
	
	We take
	\begin{equation}
		\A=2I_3 \quad \text{in } \Omega_1,
		\qquad
		\A=I_3 \quad \text{in } \Omega_2 .
		\label{eq:case6_coeff}
	\end{equation}
	
	Let
	\begin{equation}
		p=
		\frac{x_1^2+x_2^2+x_3^2+x_1x_2+x_2x_3+x_3x_1}{20},
		\qquad
		\chi=
		\frac{2+x_1+x_2+x_3}{10}.
		\label{eq:case6_p_chi}
	\end{equation}
	
	The piecewise exact solution is
	\begin{equation}
		u_1=p+\chi \quad \text{in } \Omega_1,
		\qquad
		u_2=p \quad \text{in } \Omega_2 .
		\label{eq:case6_piecewise_solution}
	\end{equation}
	
	Hence
	\begin{equation}
		\kappa_1=[u]_{\Gamma_0}=\chi|_{\Gamma_0}
		=
		\frac{2+x_1+x_2+x_3}{10},
		\qquad
		\nabla \kappa_1=\frac{1}{10}(1,1,1)^T .
		\label{eq:case6_kappa1}
	\end{equation}
	
	The corresponding interface flux jump is
	\begin{equation}
		\kappa_2
		=
		[\A^2\nabla u\cdot n_0]_{\Gamma_0}
		=
		\left(3\nabla p+\frac{2}{5}(1,1,1)^T\right)\cdot n_0 .
		\label{eq:case6_kappa2}
	\end{equation}
	
	This case tests a genuinely nonconstant solution jump on an internal three dimensional interface. The nodes in the inner cube are chosen to match the physical spacing of the outer point cloud, so the interface is not artificially over-resolved relative to the surrounding point cloud. The convergence fit is reported over $N_{\rm side}=7$ to $N_{\rm side}=19$, which is the double precision range used for the dense piecewise global MQ RBF prototype.
	
	Table~\ref{tab:3d_summary_compact_revised} gives the error ranges and fitted orders. Cases 1, 3, and 4 show monotone error reduction with refinement. Case 2 also decreases in both $L^2$ and $H^1_{\A}$, but it is included for a different purpose: it shows that $\Eapp$ can contribute substantially when the recovery space enrichment does not represent the exact weighted gradient.
	
	\begin{table}[!htbp]
		\centering
		\caption{Error ranges and fitted convergence orders for the three dimensional single domain problem \eqref{eq:3d_model_problem}, flat interface problem \eqref{eq:case5_piecewise_solution} to \eqref{eq:case5_jump_data}, and cube interface problem \eqref{eq:case6_piecewise_solution} to \eqref{eq:case6_kappa2}.}
		\label{tab:3d_summary_compact_revised}
		\small
		\begin{widetable}
			\begin{tabular}{c l c c c c c}
				\toprule
				Case & Type & $N_{\rm side}$ range
				& $L^2(\Omega)$ error range
				& $p_{L^2}^{\rm fit}$
				& $H^1_{\A}(\Omega)$ error range
				& $p_{H^1_{\A}}^{\rm fit}$ \\
				\midrule
				
				1 & Poisson
				& $11$ to $21$
				& $6.279{\times}10^{-4}\to3.234{\times}10^{-4}$
				& $0.95$
				& $2.105{\times}10^{-3}\to1.058{\times}10^{-3}$
				& $0.97$ \\
				
				2 & Nonpolynomial Poisson
				& $11$ to $21$
				& $8.789{\times}10^{-5}\to2.946{\times}10^{-5}$
				& $1.44$
				& $2.082{\times}10^{-3}\to1.079{\times}10^{-3}$
				& $0.84$ \\
				
				3 & Smooth variable coefficient
				& $11$ to $21$
				& $9.841{\times}10^{-4}\to2.278{\times}10^{-4}$
				& $2.12$
				& $2.112{\times}10^{-2}\to9.291{\times}10^{-3}$
				& $1.19$ \\
				
				4 & Nonsmooth variable coefficient
				& $11$ to $21$
				& $1.709{\times}10^{-3}\to6.259{\times}10^{-4}$
				& $1.47$
				& $3.383{\times}10^{-2}\to1.709{\times}10^{-2}$
				& $1.05$ \\
				
				5 & Flat interface jump
				& $11$ to $21$
				& $2.497{\times}10^{-2}\to1.172{\times}10^{-2}$
				& $1.12$
				& $5.764{\times}10^{-1}\to4.218{\times}10^{-1}$
				& $0.50$ \\
				
				6 & Internal interface with nonconstant $\kappa_1$
				& $7$ to $19$
				& $2.093{\times}10^{-2}\to9.777{\times}10^{-3}$
				& $0.79$
				& $1.191{\times}10^{-1}\to7.391{\times}10^{-2}$
				& $0.49$ \\
				
				\bottomrule
			\end{tabular}
		\end{widetable}
	\end{table}
	
	For the flat interface problem in Case 5, the $L^2$ error decreases overall, and the weighted energy error decreases with a lower fitted rate, with small intermediate fluctuations. This behavior is consistent with a larger projected transfer component in the interface setting. Case 6 uses a nonconstant jump on an internal interface. Over $N_{\rm side}=7$ to $N_{\rm side}=19$, both $L^2$ and $H^1_{\A}$ errors decrease. The fitted rates are lower than in the smooth single domain tests, reflecting the interface jump data, the piecewise recovery space, and the conditioning of the dense global RBF direct systems used in this realization.

	\begin{table}[!htbp]
		\centering
		\caption{Projected error decomposition and transfer response for the three dimensional tests \eqref{eq:3d_model_problem}, \eqref{eq:case5_piecewise_solution} to \eqref{eq:case5_jump_data}, and \eqref{eq:case6_piecewise_solution} to \eqref{eq:case6_kappa2}, using \eqref{eq:numerical_error_split} to \eqref{eq:total_bound_discrete}.}
		\label{tab:3d_transfer_compact_revised}
		\scriptsize
		\setlength{\tabcolsep}{3.8pt}
		\renewcommand{\arraystretch}{1.08}
		\begin{widetable}
			\begin{tabular}{c p{0.24\textwidth} c c c}
				\toprule
				Case & Type
				& $\Eapp/H^1_{\A}$ range
				& $\mathcal E_{\rm proj}^{\rm up}/H^1_{\A}$ behavior
				& Auxiliary transfer ratios \\
				\midrule
				
				1 & Poisson
				& $1.39{\times}10^{-7}$ to $4.40{\times}10^{-7}$
				& $1+O(10^{-7})$
				& $T_{123}=0.441,\quad T_{23}=0.906$ \\
				
				2 & Nonpolynomial Poisson
				& $0.470$ to $0.990$
				& $0.137$ to $0.883$
				& $T_{123}=0.481,\quad T_{23}=0.941$ \\
				
				3 & Smooth variable coefficient
				& $1.31{\times}10^{-6}$ to $3.83{\times}10^{-5}$
				& $1+O(10^{-5})$
				& $T_{123}=0.648,\quad T_{23}=0.951$ \\
				
				4 & Nonsmooth variable coefficient
				& $9.49{\times}10^{-7}$ to $2.69{\times}10^{-5}$
				& $1+O(10^{-5})$
				& $T_{123}=0.290,\quad T_{23}=0.922$ \\
				
				5 & Flat interface jump
				& $3.05{\times}10^{-9}$ to $6.54{\times}10^{-8}$
				& $1+O(10^{-8})$
				& $T_{123}=0.988,\quad T_{23}=0.986$ \\
				
				6 & Internal interface 
				& $8.14{\times}10^{-10}$ to $9.20{\times}10^{-6}$
				& $1+O(10^{-5})$
				& $T_{123}=0.648,\quad T_{23}=0.962$ \\
				
				\bottomrule
			\end{tabular}
		\end{widetable}
		\tablenote{The quantity $\mathcal E_{\rm proj}^{\rm up}$ is the projected combined upstream component entering the final recovery diagnostic.}
	\end{table}

	Table~\ref{tab:3d_transfer_compact_revised} reports numerical diagnostic values. The ratio $\Eapp/H^1_{\A}$ gives the relative size of the recovery space approximation error, and $\mathcal E_{\rm proj}^{\rm up}/H^1_{\A}$ gives the projected combined upstream component in \eqref{eq:numerical_error_split}. The auxiliary transfer ratios $T_{123}$ and $T_{23}$ record fixed space sensitivity: perturbations generated upstream and observed after recovery.
	
	In the polynomial manufactured solutions of Cases 1, 3, and 4, $\Eapp/H^1_{\A}$ is several orders of magnitude below one, so the weighted energy error is governed mainly by the projected transfer component. Case 1 is retained as the original polynomial Poisson diagnostic. Case 2 confirms the purpose of the split: for the nonpolynomial solution, the recovery space approximation error becomes visible and can dominate the final weighted gradient error. The two terms in \eqref{eq:numerical_error_split} are therefore both necessary for interpreting the final error.
	
	For the nonsmooth coefficient test in Case 4, the final error follows the projected component entering the recovery space. In the interface tests, $\mathcal E_{\rm proj}^{\rm up}$ is comparable with the final weighted energy error, whereas $\Eapp$ remains small. Case 5 serves as an interface stress test with a large projected component transmitted into the recovery space. In Case 6, the fixed space values $T_{123}=0.648$ and $T_{23}=0.962$ at $N_{\rm side}=15$ indicate partial attenuation of perturbations introduced in Subproblem~(S1), but much weaker attenuation when the perturbation is introduced directly in the Subproblem~(S2) correction field.
	
	The dense global MQ realization makes the sequential algebraic cost explicit. Table~\ref{tab:main_cost_summary} summarizes the largest reported three dimensional runs. The detailed record for each case, including solver choices and condition estimate definitions, is given in Appendix~\ref{app:validation_scale_cost}.
	
	\begingroup
	\begin{table}[!htbp]
		\centering
		\caption{Main text cost summary for the dense global RBF realization of NDM.}
		\label{tab:main_cost_summary}
		\small
		\begin{tabular}{lcccc}
			\toprule
			Test block
			& $N_{\rm side}$
			& Linear systems
			& DOF $(S1/S2/S3)$
			& CPU time (s) \\
			\midrule
			Single domain 3D cases 1 to 4
			& 21
			& 3
			& $9266/27794/9280$
			& $3970.77$ to $4866.37$ \\
			Flat interface case 5
			& 21
			& 4
			& $9266/27809/9740$
			& $2612.13$ \\
			Internal interface case 6
			& 19
			& 4
			& $6870/20588/7770$
			& $2080.72$ \\
			\bottomrule
		\end{tabular}
	\end{table}
	\endgroup
	
	These timings quantify the current dense realization. Table~\ref{tab:main_cost_summary} gives the cost profile of the dense global MQ realization, while scalable sparse or local realizations are separate implementation issues.
	
	The next subsection uses a two dimensional benchmark in the same noninterpolatory MQ RBF Galerkin setting to compare NDM, penalty, Nitsche, and multiplier boundary treatments in terms of accuracy, conditioning, boundary residuals, and parameter sensitivity.

	\subsection{Comparison with Classical Treatments of Essential Boundary Conditions}
	\label{subsec:ebc_mechanism_tests}
	
	This subsection compares boundary treatment mechanisms in a common noninterpolatory MQ RBF Galerkin space. NDM is represented by its fixed source, curl, and recovery transfer, while penalty, Nitsche, and Lagrange multiplier treatments provide standard weak or constrained alternatives. The comparison reports domain accuracy, boundary residual control, conditioning, and the effect of boundary-parameter or constraint choices. In the tables below, Strong pointwise denotes a nodal benchmark obtained by transforming the MQ basis to cardinal form and assigning the prescribed boundary values at selected boundary centers. All tests use the same nodes, kernel parameters, and integration rules. The algebraic conditioning diagnostic is denoted by $\Cond(K)$; for NDM it is the maximum condition estimate among the three sequential subproblem matrices.
	
	\subsubsection{FH Laplace Parameter Study}
	\label{subsubsec:unified_ebc_benchmark}
	
	The comparison uses the two dimensional Laplace benchmark of Fern\'andez-M\'endez and Huerta~\citep{FernandezMendezHuerta2004},
	\begin{equation}
		\Delta u=0
		\quad \text{in } \Omega=(0,1)^2,
		\label{eq:laplace_benchmark_pde}
	\end{equation}
	with
	\begin{equation}
		u(x,0)=\sin(\pi x),
		\qquad
		u(x,1)=u(0,y)=u(1,y)=0 .
		\label{eq:laplace_benchmark_bc}
	\end{equation}
	
	The exact solution is
	\begin{equation}
		u(x,y)=\frac{\sinh(\pi(1-y))}{\sinh(\pi)}\sin(\pi x).
		\label{eq:laplace_benchmark_exact}
	\end{equation}
	
	The absence of a forcing term and the presence of nontrivial boundary data make this benchmark useful for isolating the effect of the boundary treatment on both errors and algebraic conditioning.
	
	Penalty and Nitsche methods use $\beta_{\rm P}$ and $\beta_{\rm N}$ to control boundary enforcement, so the two parameter families are scanned separately. The penalty scaling follows Fern\'andez-M\'endez and Huerta. For Nitsche's method we include fixed diagnostic values and a trace generalized eigenvalue estimate computed from the present MQ RBF space, multiplied by a conservative safety factor. The Lagrange multiplier method uses boundary point constraints, while NDM is used as a parameter-free transfer mechanism.
	
	The fixed grid comparison for $N_{\rm side}=30$ and $c/h=3$ is organized in three summaries. Table~\ref{tab:fh_runtime_cost} reports the associated computational cost, separating the cost of one prescribed solve from the cost of choosing a boundary treatment. Table~\ref{tab:ebc_tradeoff_summary} extracts representative rows to display the tradeoff among accuracy, boundary residual, and conditioning. The complete parameter and strategy record is given in Table~\ref{tab:ebc_selected_summary}, where preselected choices are distinguished from post hoc $H^1$ optima. Rows marked as post hoc $H^1$ optima use the exact solution to minimize $|e|_{H^1(\Omega)}$ over the scanned list and are diagnostic lower envelope rows for the scanned families, not a priori choices.
	
	With post hoc $H^1$ optimum parameter selection, penalty and Nitsche give the smallest domain errors in this benchmark: their $H^1$ seminorm errors are $1.372\times10^{-2}$ and $1.364\times10^{-2}$, respectively, compared with $4.065\times10^{-2}$ for the fixed NDM transfer. These rows show the accuracy attainable when exact solution information selects boundary parameter values from the scanned families. The NDM row provides a parameter-free transfer mechanism in the same MQ space.
	
	The cost comparison separates a prescribed solve from the choice of boundary treatment parameters. NDM uses the fixed sequence S1, S2, and S3. Penalty and Nitsche methods use one scalar solve after a boundary parameter has been prescribed, while a scan requires one solve per candidate. The comparison therefore reports accuracy, conditioning, and parameter selection separately.
	
	All four methods use the same noninterpolatory MQ RBF space, so the table compares boundary terms, multiplier constraints, and the sequential natural transfer mechanism within a common approximation setting.
	
	\begingroup
	\begin{table}[!htbp]
		\centering
		\caption{Wall clock cost on the two dimensional FH benchmark \eqref{eq:laplace_benchmark_pde} to \eqref{eq:laplace_benchmark_exact}, with fixed $N_{\rm side}=30$ and $c/h=3$. The common MQ data construction time was $0.610$ s and is not assigned to any individual method. Condition number diagnostics and plotting are excluded.}
		\label{tab:fh_runtime_cost}
		\scriptsize
		\setlength{\tabcolsep}{3.2pt}
		\begin{widetable}
			\begin{tabular}{llccccc}
				\toprule
				Method
				& Boundary choice
				& Search size
				& Setup time (s)
				& Method time (s)
				& Total time (s)
				& $|e|_{H^1(\Omega)}$ \\
				\midrule
				Penalty
				& one prescribed $\beta_{\rm P}$
				& 1
				& $0.000$
				& $0.028$
				& $0.028$
				& $6.017{\times}10^{-1}$ \\
				Penalty
				& diagnostic scan
				& 19
				& $0.000$
				& $0.491$
				& $0.491$
				& $1.372{\times}10^{-2}$ \\
				Nitsche
				& trace eigenvalue
				& 1
				& $0.141$
				& $0.029$
				& $0.170$
				& $7.650{\times}10^{-2}$ \\
				Nitsche
				& diagnostic scan
				& 19
				& $0.000$
				& $0.533$
				& $0.533$
				& $1.364{\times}10^{-2}$ \\
				Lagrange multiplier
				& fixed constraint set
				& 1
				& $0.000$
				& $0.053$
				& $0.053$
				& $2.319{\times}10^{-2}$ \\
				NDM
				& fixed natural transfer
				& 1
				& $0.000$
				& $0.531$
				& $0.531$
				& $4.065{\times}10^{-2}$ \\
				\bottomrule
			\end{tabular}
		\end{widetable}
		\tablenote{The setup time records method-specific parameter construction, such as the trace eigenvalue estimate for the Nitsche row. The method time records the wall clock time of the corresponding solve or scan after the common MQ data are available. The diagnostic scan rows report the best $H^1$ row selected from the scanned list using exact solution information.}
	\end{table}
	\endgroup
	
	Table~\ref{tab:fh_runtime_cost} gives the wall clock cost associated with these choices. A single prescribed penalty or Nitsche solve requires less time than the three-step NDM transfer, while the diagnostic scans have costs comparable to the fixed NDM sequence in this benchmark.
	
	Table~\ref{tab:ebc_tradeoff_summary} extracts representative preselected and post hoc $H^1$ optimum rows from Table~\ref{tab:ebc_selected_summary} to show the accompanying conditioning tradeoff.
	
	\begin{table}[!htbp]
		\centering
		\caption{Representative accuracy and conditioning tradeoff for the FH Laplace benchmark \eqref{eq:laplace_benchmark_pde} to \eqref{eq:laplace_benchmark_exact}, with $N_{\rm side}=30$ and $c/h=3$.}
		\label{tab:ebc_tradeoff_summary}
		\small
		\begin{tabular}{lcccc}
			\toprule
			Method & Parameter choice & $|e|_{H^1(\Omega)}$ & $L^2(\Gamma)$ & $\Cond(K)$ \\
			\midrule
			Penalty
			& FH scaling, preselected
			& $6.017{\times}10^{-1}$
			& $4.02{\times}10^{-5}$
			& $1.395{\times}10^{20}$ \\
			Penalty
			& post hoc $H^1$ optimum
			& $1.372{\times}10^{-2}$
			& $1.443{\times}10^{-4}$
			& $8.512{\times}10^{19}$ \\
			Nitsche
			& Trace eigenvalue, preselected
			& $7.650{\times}10^{-2}$
			& $2.788{\times}10^{-3}$
			& $1.957{\times}10^{18}$ \\
			Nitsche
			& post hoc $H^1$ optimum
			& $1.364{\times}10^{-2}$
			& $2.611{\times}10^{-5}$
			& $2.024{\times}10^{19}$ \\
			Lagrange
			& Boundary node multipliers
			& $2.319{\times}10^{-2}$
			& $2.213{\times}10^{-4}$
			& $1.403{\times}10^{15}$ \\
			NDM
			& None
			& $4.065{\times}10^{-2}$
			& $1.647{\times}10^{-3}$
			& $1.176{\times}10^{15}$ \\
			\bottomrule
		\end{tabular}
		\tablenote{Rows labelled post hoc $H^1$ optimum use the exact solution only to select a candidate from the scanned parameter list.}
	\end{table}
	
	\begin{figure}[htbp]
		\centering
		\includegraphics[width=0.5\textwidth]{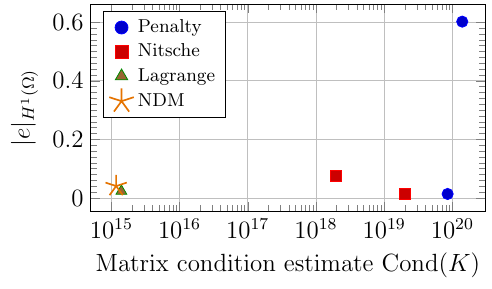}
		\caption{Accuracy and conditioning scatter for the FH Laplace benchmark \eqref{eq:laplace_benchmark_pde} to \eqref{eq:laplace_benchmark_exact}, based on the representative rows in Table~\ref{tab:ebc_tradeoff_summary}. The NDM point is shown by a larger star marker and represents the fixed natural transfer reference; penalty and Nitsche points reflect selected parameter strategies.}
		\label{fig:ebc_tradeoff_pareto}
	\end{figure}
	
	Figure~\ref{fig:ebc_tradeoff_pareto} is a diagnostic Pareto view of the accuracy and conditioning tradeoff. A point improves on another in this display only if it reduces both the domain error and the condition estimate. The post hoc penalty and Nitsche points show lower domain errors after boundary parameter selection by the exact solution. The multiplier point gives a smaller $H^1$ error in this benchmark together with a saddle point formulation and a chosen constraint set. The NDM point identifies the parameter-free natural transfer profile within the same MQ approximation setting.
	
	\begin{table}[!htbp]
		\centering
		\caption{Parameter strategies, errors, and matrix conditioning for the FH Laplace benchmark \eqref{eq:laplace_benchmark_pde} to \eqref{eq:laplace_benchmark_exact}, with fixed $N_{\rm side}=30$ and $c/h=3$.}
		\label{tab:ebc_selected_summary}
		\scriptsize
		\begin{widetable}
			\begin{tabular}{llcccccc}
				\toprule
				Method & Parameter strategy & Post hoc $H^1$ optimum & Parameter value & $L^2(\Omega)$ & $|e|_{H^1(\Omega)}$ & $L^2(\Gamma)$ & $\Cond(K)$ \\
				\midrule
				Penalty
				& FH scaling $(10^4/8)h^{-2}$
				& No
				& $1.051{\times}10^6$
				& $8.100{\times}10^{-3}$
				& $6.017{\times}10^{-1}$
				& $4.02{\times}10^{-5}$
				& $1.395{\times}10^{20}$ \\
				Penalty
				& $10^4h^{-1}$
				& No
				& $2.900{\times}10^5$
				& $2.212{\times}10^{-3}$
				& $1.781{\times}10^{-1}$
				& $1.80{\times}10^{-5}$
				& $6.174{\times}10^{19}$ \\
				Penalty
				& post hoc $H^1$ optimum
				& Yes
				& $1.000{\times}10^4$
				& $1.330{\times}10^{-4}$
				& $1.372{\times}10^{-2}$
				& $1.443{\times}10^{-4}$
				& $8.512{\times}10^{19}$ \\
				\midrule
				Nitsche
				& Current RBF trace eigenvalue
				& No
				& $1.748{\times}10^2$
				& $5.13{\times}10^{-4}$
				& $7.650{\times}10^{-2}$
				& $2.788{\times}10^{-3}$
				& $1.957{\times}10^{18}$ \\
				Nitsche
				& Fixed diagnostic value $\beta_{\rm N}=20$
				& No
				& $20$
				& $5.33{\times}10^{-4}$
				& $4.417{\times}10^{-2}$
				& $2.759{\times}10^{-3}$
				& $9.814{\times}10^{16}$ \\
				Nitsche
				& post hoc $H^1$ optimum
				& Yes
				& $1.000{\times}10^4$
				& $1.284{\times}10^{-4}$
				& $1.364{\times}10^{-2}$
				& $2.611{\times}10^{-5}$
				& $2.024{\times}10^{19}$ \\
				\midrule
				Lagrange
				& Boundary node multipliers
				& No
				& n.a.
				& $2.052{\times}10^{-4}$
				& $2.319{\times}10^{-2}$
				& $2.213{\times}10^{-4}$
				& $1.403{\times}10^{15}$ \\
				NDM
				& $S_1$ to $S_2$ to $S_3$
				& No
				& n.a.
				& $6.363{\times}10^{-4}$
				& $4.065{\times}10^{-2}$
				& $1.647{\times}10^{-3}$
				& $1.176{\times}10^{15}$ \\
				\bottomrule
			\end{tabular}
		\end{widetable}
		\tablenote{The post hoc $H^1$ optimum rows use the exact solution only to select a representative scanned parameter value.}
	\end{table}
	
	The parameter scans make the tradeoff more explicit. In Figure~\ref{fig:ebc_beta_sweep}, the penalty and Nitsche domain errors vary over clear parameter windows; increasing the boundary enforcement strength alone does not guarantee a smaller domain error. Figure~\ref{fig:ebc_condition_raw} shows the corresponding change in $\Cond(K)$. Since the four formulations lead to different algebraic systems, these condition estimates are read as mechanism-specific conditioning diagnostics within the common global MQ RBF trial space.
	
	\begin{figure}[htbp]
		\centering
		\includegraphics[width=0.90\textwidth]{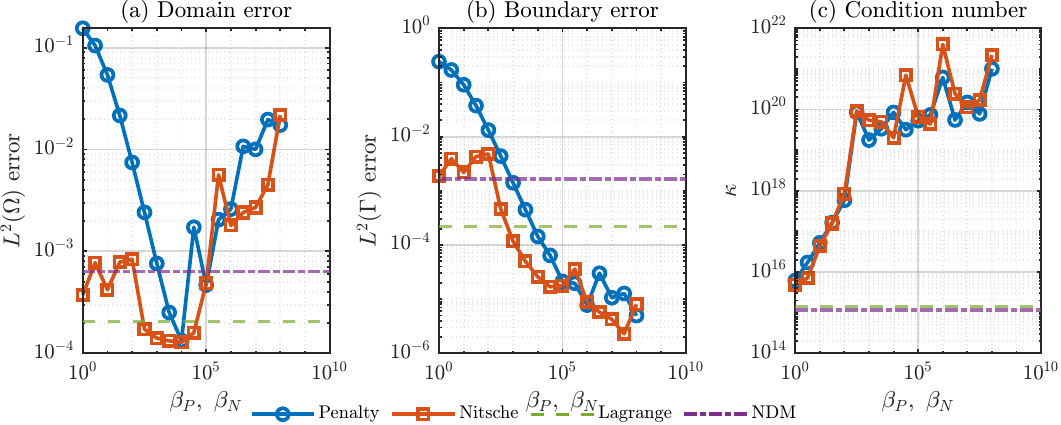}
		\caption{Boundary parameter scan for the FH Laplace benchmark \eqref{eq:laplace_benchmark_pde} to \eqref{eq:laplace_benchmark_exact}, with fixed $N_{\rm side}=30$ and $c/h=3$. Penalty varies with $\beta_{\rm P}$ and Nitsche varies with $\beta_{\rm N}$; the Lagrange multiplier method and NDM are reference lines without boundary parameter scans.}
		\label{fig:ebc_beta_sweep}
	\end{figure}
	
	\begin{figure}[htbp]
		\centering
		\includegraphics[width=0.90\textwidth]{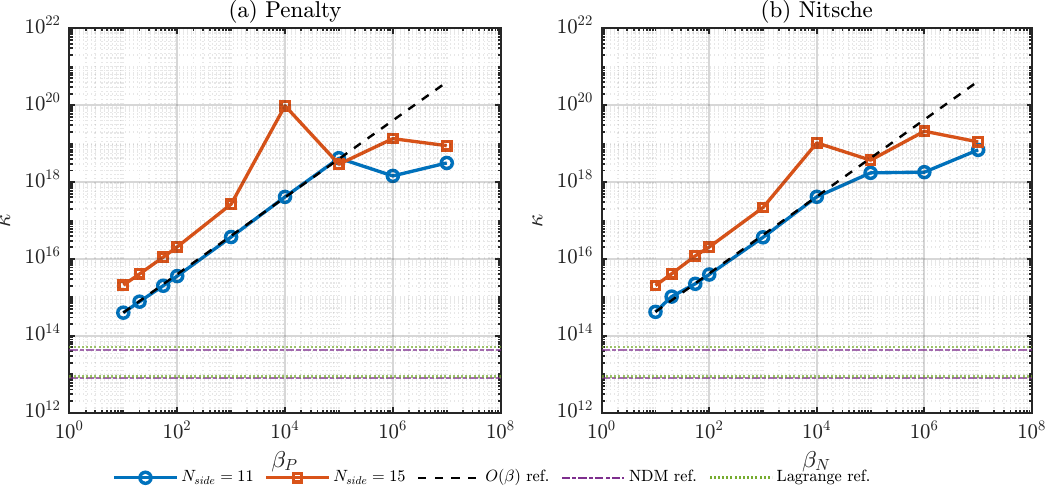}
		\caption{Condition estimate $\Cond(K)$ versus boundary parameter for the FH Laplace benchmark \eqref{eq:laplace_benchmark_pde} to \eqref{eq:laplace_benchmark_exact}. The horizontal NDM reference is the maximum condition estimate among the three sequential subproblem matrices \eqref{eq:3d-unified-s1} to \eqref{eq:3d-unified-s3}.}
		\label{fig:ebc_condition_raw}
	\end{figure}
	
	Table~\ref{tab:ndm_subproblem_condition} separates the condition estimates of the three NDM subproblems. All three increase under refinement, reflecting the inherent conditioning of dense global MQ RBF spaces. The decomposition acts as a boundary transfer mechanism implemented by natural subproblems, and the conditioning reflects the dense global MQ realization. 
	
	\begin{table}[!htbp]
		\centering
		\caption{Condition estimate decomposition for the three NDM subproblems \eqref{eq:3d-unified-s1} to \eqref{eq:3d-unified-s3}, evaluated on the FH Laplace benchmark \eqref{eq:laplace_benchmark_pde} to \eqref{eq:laplace_benchmark_exact}.}
		\label{tab:ndm_subproblem_condition}
		\small
		\begin{tabular}{cccccc}
			\toprule
			$N_{\rm side}$ & $h$ & $\Cond_{S_1}$ & $\Cond_{S_2}$ & $\Cond_{S_3}$ & $\Cond_{\max}$ \\
			\midrule
			11 & 0.1000 & $7.975{\times}10^{12}$ & $7.969{\times}10^{12}$ & $5.191{\times}10^{12}$ & $7.975{\times}10^{12}$ \\
			15 & 0.0714 & $4.255{\times}10^{13}$ & $4.254{\times}10^{13}$ & $3.448{\times}10^{13}$ & $4.255{\times}10^{13}$ \\
			21 & 0.0500 & $2.271{\times}10^{14}$ & $2.271{\times}10^{14}$ & $2.064{\times}10^{14}$ & $2.271{\times}10^{14}$ \\
			30 & 0.0345 & $1.176{\times}10^{15}$ & $1.176{\times}10^{15}$ & $1.128{\times}10^{15}$ & $1.176{\times}10^{15}$ \\
			\bottomrule
		\end{tabular}
	\end{table}
	
	Figures~\ref{fig:ebc_convergence_penalty} and~\ref{fig:ebc_convergence_nitsche} show the corresponding refinement curves. The curves agree with the fixed grid scans: penalty and Nitsche methods can give small errors when the parameters are well chosen, but the effective choices lie in parameter windows. NDM gives the corresponding natural transfer curve for the essential boundary data.
	
	\begin{figure}[t]
		\centering
		\includegraphics[width=0.92\textwidth]{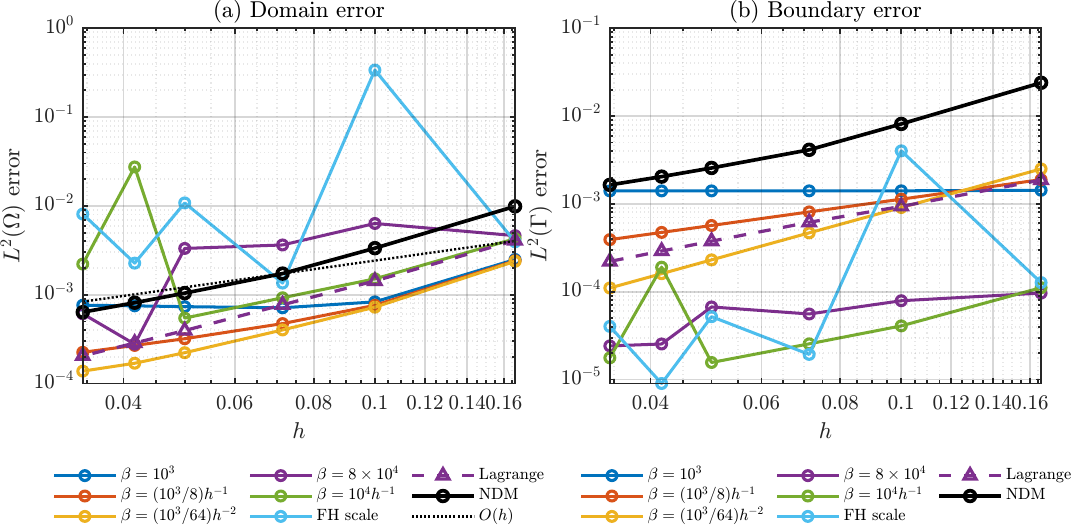}
		\caption{Refinement curves for penalty boundary treatments on the FH Laplace benchmark \eqref{eq:laplace_benchmark_pde} to \eqref{eq:laplace_benchmark_exact}. Penalty parameter strategies are compared with the Lagrange multiplier and NDM reference curves, which do not use a scanned boundary parameter.}
		\label{fig:ebc_convergence_penalty}
	\end{figure}
	
	\pagebreak[3]
	
	\begin{figure}[t]
		\centering
		\includegraphics[width=0.92\textwidth]{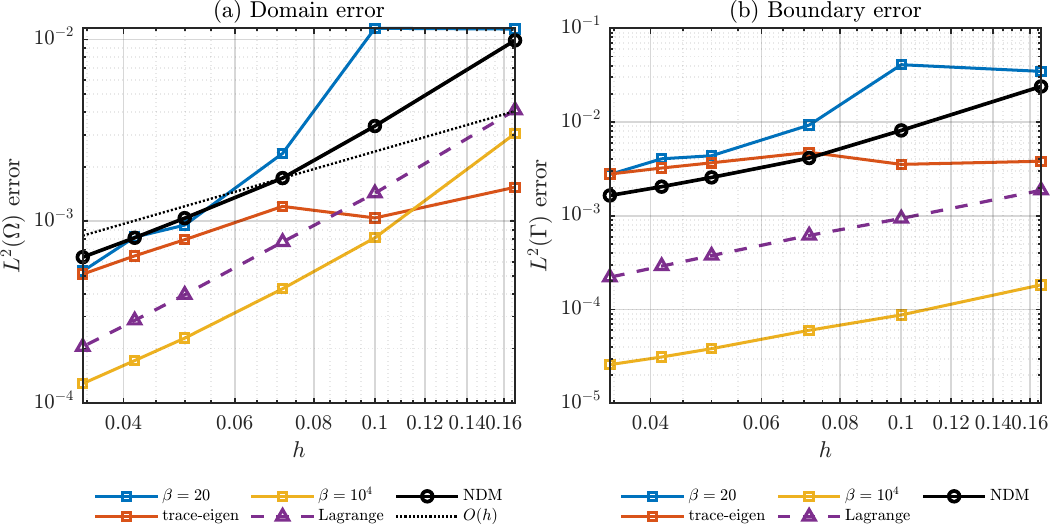}
		\caption{Refinement curves for Nitsche boundary treatments on the FH Laplace benchmark \eqref{eq:laplace_benchmark_pde} to \eqref{eq:laplace_benchmark_exact}. Stabilization strategies are compared with the Lagrange multiplier and NDM reference curves, which do not use a scanned boundary parameter.}
		\label{fig:ebc_convergence_nitsche}
	\end{figure}
	
	\subsubsection{L-Shaped Singular Benchmark}
	\label{subsubsec:lshape_singular}
	
	The square benchmark isolates boundary parameter effects on a smooth domain. We also test an L-shaped domain to examine the natural decomposition transfer in the presence of a reentrant corner. Such domains are standard in meshfree and partition of unity studies of essential boundary condition treatments. Griebel and Schweitzer~\citep{GriebelSchweitzer2003} and Schweitzer~\citep{Schweitzer2009}, for example, used L-shaped domains to assess Nitsche and algebraic conforming treatments. In these tests, the harmonic singularity limits convergence, while graded point distributions near the corner improve the observed rates.
	
	We consider
	\begin{equation}
		\Omega_L=(-1,1)^2\setminus [0,1]^2 ,
		\label{eq:lshape_domain}
	\end{equation}
	and prescribe the exact singular solution
	\begin{equation}
		u(r,\theta)
		=
		r^{2/3}
		\sin\left(\frac{2\theta-\pi}{3}\right),
		\label{eq:lshape_singular_solution}
	\end{equation}
	where $(r,\theta)$ are polar coordinates centered at the reentrant corner. The manufactured problem is
	\begin{equation}
		-\Delta u=0
		\quad \text{in } \Omega_L,
		\qquad
		u=g
		\quad \text{on } \partial\Omega_L ,
		\label{eq:lshape_problem}
	\end{equation}
	with $g=u|_{\partial\Omega_L}$. The test uses the same singular profile as the meshfree essential boundary benchmarks just cited, while using the present MQ RBF NDM discretization and boundary transfer mechanism. It checks the behavior of the decomposition in the regularity limited regime induced by the reentrant corner. Since
	\[
	|\nabla u|\sim r^{-1/3}
	\quad \text{near the reentrant corner},
	\]
	uniform centers are expected to produce lower convergence rates than smooth manufactured solutions. Following the diagnostic convention used by Schweitzer~\citep{Schweitzer2009}, we report rates with respect to the number of degrees of freedom, denoted by $\rho$. For uniform centers, the rates expected when convergence is limited by the singularity are approximately
	\[
	\rho_{L^2}\approx \frac{2}{3},
	\qquad
	\rho_{H^1}\approx \frac{1}{3}.
	\]
	
	With mild local grading near the reentrant corner, the rates are expected to approach the graded point behavior,
	\[
	\rho_{L^2}\approx 1,
	\qquad
	\rho_{H^1}\approx \frac{1}{2}.
	\]
	
	Table~\ref{tab:lshape_fit_rates} gives the fitted rates, and Figure~\ref{fig:lshape_uniform_graded} compares uniform and mildly graded centers. Uniform centers recover the trend limited by regularity expected under a corner singularity. Mild grading improves both error norms: the $H^1$ rate increases from about $0.36$ to $0.56$, and the $L^2$ rate from about $0.73$ to $1.22$. The diagnostic rate of $\Eapp$ also increases, from about $0.36$ to $0.54$, indicating that the improvement is mainly due to better resolution of the singular weighted gradient near the reentrant corner.
	
	\begin{table}[!htbp]
		\centering
		\caption{Fitted degrees of freedom convergence rates for the L-shaped singular benchmark \eqref{eq:lshape_domain} to \eqref{eq:lshape_problem}.}
		\label{tab:lshape_fit_rates}
		\small
		\begin{tabular}{lcccc}
			\toprule
			Center distribution
			& $\rho_{L^2}$
			& $\rho_{H^1}$
			& $\rho_{\rm APP}$
			& $\rho_{\rm S2proj}$ \\
			\midrule
			Uniform centers
			& $0.728$
			& $0.357$
			& $0.357$
			& $0.360$ \\
			Mild graded centers
			& $1.218$
			& $0.559$
			& $0.541$
			& $0.665$ \\
			\bottomrule
		\end{tabular}
	\end{table}
	
	\begin{figure}[htbp]
		\centering
		\includegraphics[width=0.78\textwidth]{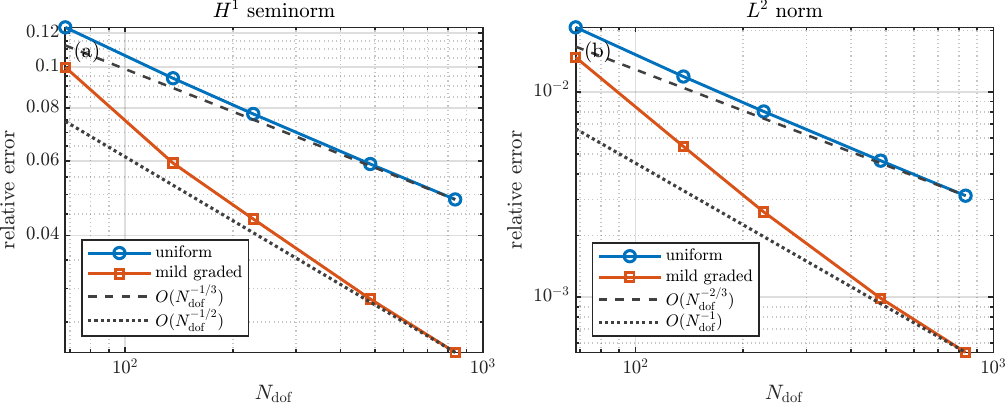}
		\caption{NDM convergence on the L-shaped singular benchmark \eqref{eq:lshape_domain} to \eqref{eq:lshape_problem}. Uniform centers recover rates expected under a corner singularity, while mild local grading near the reentrant corner improves the observed degrees of freedom rates.}
		\label{fig:lshape_uniform_graded}
	\end{figure}
	\subsubsection{Constraint Density and Point Clouds}
	\label{subsubsec:lagrange_scattered}
	
	The multiplier method introduces a different discrete choice: the multiplier constraint density. Table~\ref{tab:lagrange_density} shows its effect in the same RBF space. With too few constraint points, both domain and boundary errors are large. Boundary node constraints give a better balance. Doubling the boundary constraint density reduces the boundary error to $2.510\times10^{-8}$, but increases the domain $H^1$ error to $7.969\times10^{-2}$. Thus a very small boundary point residual does not by itself imply the best domain solution; the multiplier pathway is sensitive to the constraint space.
	
	\begin{table}[!htbp]
		\centering
		\caption{Sensitivity of the Lagrange multiplier method to boundary constraint point density for the FH Laplace benchmark \eqref{eq:laplace_benchmark_pde} to \eqref{eq:laplace_benchmark_exact}, with $N_{\rm side}=30$ and $c/h=3$.}
		\label{tab:lagrange_density}
		\small
		\begin{tabular}{lcccc}
			\toprule
			Constraint point distribution & Number of constraints & $L^2(\Omega)$ & $|e|_{H^1(\Omega)}$ & $L^2(\Gamma)$ \\
			\midrule
			half boundary nodes
			& 60
			& $3.236{\times}10^{-3}$
			& $1.519{\times}10^{-1}$
			& $1.024{\times}10^{-2}$ \\
			boundary nodes
			& 116
			& $2.052{\times}10^{-4}$
			& $2.319{\times}10^{-2}$
			& $2.213{\times}10^{-4}$ \\
			double boundary nodes
			& 232
			& $1.056{\times}10^{-3}$
			& $7.969{\times}10^{-2}$
			& $2.510{\times}10^{-8}$ \\
			\bottomrule
		\end{tabular}
	\end{table}
	
	We next replace the tensor grid by quasi-uniform jittered point clouds and a boundary enriched point cloud. Table~\ref{tab:scattered_node_cloud} reports the results for $N_{\rm side}=21$, with means and standard deviations over five seeds for the jittered cases. For NDM, the $H^1$ standard deviations are $1.71\times10^{-4}$ under $0.15h$ jitter and $3.87\times10^{-4}$ under $0.30h$ jitter, and no solver failure occurs. Boundary enrichment reduces the NDM $H^1$ error from $6.077\times10^{-2}$ on tensor product nodes to $5.731\times10^{-2}$. These data show that the NDM RBF implementation also operates on quasi-uniform point clouds, and that the tested perturbations have a small effect on the reported NDM errors. The comparison with the multiplier rows shows that NDM uses the same transfer mechanism on these point clouds, while multiplier behavior depends on constraint density.
	
	\begin{table}[!htbp]
		\centering
		\caption{Point cloud stability of essential boundary condition treatments for the FH Laplace benchmark \eqref{eq:laplace_benchmark_pde} to \eqref{eq:laplace_benchmark_exact}, with $N_{\rm side}=21$ and $c/h=3$.}
		\label{tab:scattered_node_cloud}
		\scriptsize
		\begin{widetable}
			\begin{tabular}{llcccc}
				\toprule
				Point cloud & Method & $N_{\rm seed}$ & $L^2(\Omega)$ & $|e|_{H^1(\Omega)}$ & $L^2(\Gamma)$ \\
				\midrule
				tensor
				& Penalty & 1 & $4.168{\times}10^{-3}$ & $2.406{\times}10^{-1}$ & $3.60{\times}10^{-5}$ \\
				tensor
				& Nitsche & 1 & $4.76{\times}10^{-4}$ & $5.775{\times}10^{-2}$ & $2.430{\times}10^{-3}$ \\
				tensor
				& Lagrange & 1 & $3.44{\times}10^{-4}$ & $2.699{\times}10^{-2}$ & $3.33{\times}10^{-4}$ \\
				tensor
				& NDM & 1 & $1.221{\times}10^{-3}$ & $6.077{\times}10^{-2}$ & $3.239{\times}10^{-3}$ \\
				\midrule
				jitter $0.15h$
				& Penalty & 5 & $(5.277\pm5.101){\times}10^{-3}$ & $(3.034\pm2.939){\times}10^{-1}$ & $(6.7\pm7.3){\times}10^{-5}$ \\
				jitter $0.15h$
				& Nitsche & 5 & $(4.82\pm0.07){\times}10^{-4}$ & $(5.884\pm0.103){\times}10^{-2}$ & $(2.480\pm0.047){\times}10^{-3}$ \\
				jitter $0.15h$
				& Lagrange & 5 & $(3.61\pm0.04){\times}10^{-4}$ & $(2.863\pm0.046){\times}10^{-2}$ & $(4.76\pm0.47){\times}10^{-4}$ \\
				jitter $0.15h$
				& NDM & 5 & $(1.227\pm0.005){\times}10^{-3}$ & $(6.099\pm0.017){\times}10^{-2}$ & $(3.249\pm0.011){\times}10^{-3}$ \\
				\midrule
				jitter $0.30h$
				& Penalty & 5 & $(4.663\pm4.435){\times}10^{-3}$ & $(2.660\pm2.511){\times}10^{-1}$ & $(5.2\pm5.0){\times}10^{-5}$ \\
				jitter $0.30h$
				& Nitsche & 5 & $(5.13\pm0.16){\times}10^{-4}$ & $(6.307\pm0.222){\times}10^{-2}$ & $(2.677\pm0.106){\times}10^{-3}$ \\
				jitter $0.30h$
				& Lagrange & 5 & $(3.84\pm0.09){\times}10^{-4}$ & $(3.022\pm0.092){\times}10^{-2}$ & $(5.92\pm0.69){\times}10^{-4}$ \\
				jitter $0.30h$
				& NDM & 5 & $(1.243\pm0.011){\times}10^{-3}$ & $(6.153\pm0.039){\times}10^{-2}$ & $(3.271\pm0.023){\times}10^{-3}$ \\
				\midrule
				boundary enriched
				& Penalty & 1 & $1.731{\times}10^{-3}$ & $1.010{\times}10^{-1}$ & $2.7{\times}10^{-5}$ \\
				boundary enriched
				& Nitsche & 1 & $1.416{\times}10^{-3}$ & $1.908{\times}10^{-1}$ & $8.465{\times}10^{-3}$ \\
				boundary enriched
				& Lagrange & 1 & $5.54{\times}10^{-4}$ & $4.387{\times}10^{-2}$ & $1.449{\times}10^{-3}$ \\
				boundary enriched
				& NDM & 1 & $1.129{\times}10^{-3}$ & $5.731{\times}10^{-2}$ & $2.961{\times}10^{-3}$ \\
				\bottomrule
			\end{tabular}
		\end{widetable}
		\tablenote{Jittered point clouds use five random seeds; values in parentheses are standard deviations.}
	\end{table}
	\subsection{Boundary Perturbation Response}
	\label{subsec:perturbation_transfer}
	
	The preceding tests compare methods under unperturbed boundary data. We next add a small essential boundary component on part of the boundary and measure its recovered interior propagation together with its fidelity to the harmonic reference generated by the same boundary input. This design separates attenuation by the discrete recovery space from agreement with the continuous harmonic response. We therefore keep the same smooth Poisson problem and add a controlled boundary component. Let
	\begin{equation}
		\Omega=(-1,1)^2,
		\qquad
		u(x,y)=x^2+y^2+\sin(x+y),
		\qquad
		-\Delta u=-4+2\sin(x+y),
		\label{eq:perturbation_base_problem}
	\end{equation}
	and add a perturbation only on the bottom side of the unperturbed boundary data $g$:
	\[
	\Gamma_b=\{(x,-1):-1\le x\le 1\}.
	\]
	
	The perturbed data are
	\begin{equation}
		g_\varepsilon=g+\varepsilon\eta_k,
		\qquad
		\varepsilon=10^{-2},
		\label{eq:perturbed_boundary_data}
	\end{equation}
	where
	\begin{equation}
		\eta_k(x,-1)
		=
		(1-x^2)^2
		\sin\!\left(\frac{k\pi(x+1)}{2}\right),
		\qquad
		\eta_k=0\quad \text{on }\Gamma\setminus\Gamma_b .
		\label{eq:boundary_perturbation_mode}
	\end{equation}
	
	The factor $(1-x^2)^2$ makes the perturbation vanish at the corners, avoiding an additional corner incompatibility. We test $k=4,8,12,16$.
	
	For any method $M$, define the incremental response by
	\begin{equation}
		\Delta u_h^M
		=
		u_h^M(g+\varepsilon\eta_k)-u_h^M(g).
		\label{eq:incremental_response_def}
	\end{equation}
	
	This subtraction removes the baseline discretization error associated with the unperturbed data and isolates the response to the added boundary input. We also construct the harmonic reference for the bottom side perturbation, so that the incremental response can be compared with the continuous essential-boundary response. Let
	\[
	X=\frac{x+1}{2},
	\qquad
	Y=\frac{y+1}{2} .
	\]
	
	The harmonic extension of the continuous essential boundary perturbation problem can be written as
	\begin{equation}
		\Delta u_{\rm ref}(x,y)
		=
		\varepsilon\sum_{n=1}^{N_m}
		b_n\sin(n\pi X)
		\frac{\sinh(n\pi(1-Y))}{\sinh(n\pi)},
		\label{eq:harmonic_reference_increment}
	\end{equation}
	where $N_m$ is the number of sine modes retained in the numerical computation, and
	\begin{equation}
		b_n
		=
		2\int_0^1
		\left[1-(2s-1)^2\right]^2
		\sin(k\pi s)\sin(n\pi s)\,ds .
		\label{eq:harmonic_reference_coeff}
	\end{equation}
	
	Define the thin layer adjacent to the perturbed boundary by
	\begin{equation}
		\Omega_\rho
		=
		\{(x,y)\in\Omega:-1\le y\le -1+\rho\},
		\qquad
		\rho=\max\{3h,0.15\},
		\label{eq:strip_region_def}
	\end{equation}
	and take the near boundary interior observation line $y_\rho=-1+2h$. Normalized by the $L^2(\Gamma)$ norm of the input perturbation, the thin layer propagation gain, interior propagation gain, observation-line gain, and near boundary total variation gain are defined as
	\begin{equation}
		\begin{aligned}
			G_{\rm strip}^M(k)\mathrel{:=}
			\frac{\|\Delta u_h^M\|_{L^2(\Omega_\rho)}}{\|\varepsilon\eta_k\|_{L^2(\Gamma)}};\;
			G_{\rm int}^M(k)\mathrel{:=}
			\frac{\|\Delta u_h^M\|_{L^2(\Omega\setminus\Omega_\rho)}}{\|\varepsilon\eta_k\|_{L^2(\Gamma)}};\\
			G_{\rm line}^M(k)\mathrel{:=}
			\frac{\|\Delta u_h^M(\cdot,y_\rho)\|_{L^2(-1,1)}}{\|\varepsilon\eta_k\|_{L^2(\Gamma)}};\;
			G_{\rm TV}^M(k)\mathrel{:=}
			\frac{\operatorname{TV}(\Delta u_h^M(\cdot,y_\rho))}{\|\varepsilon\eta_k\|_{L^2(\Gamma)}} .
		\end{aligned}
		\label{eq:gains}
	\end{equation}
	
	Here $\operatorname{TV}$ is the discrete total variation along $y=y_\rho$, used as a measure of near boundary oscillation. Relative errors with respect to the harmonic reference are defined by
	\begin{equation}
		E_{\rm ref,L^2}^M(k)
		=
		\frac{\|\Delta u_h^M-\Delta u_{\rm ref}\|_{L^2(\Omega)}}
		{\|\Delta u_{\rm ref}\|_{L^2(\Omega)}},
		\qquad
		E_{\rm ref,H^1}^M(k)
		=
		\frac{|\Delta u_h^M-\Delta u_{\rm ref}|_{H^1(\Omega)}}
		{|\Delta u_{\rm ref}|_{H^1(\Omega)}} .
		\label{eq:reference_relative_error}
	\end{equation}
	
	For NDM, we also record the projection transfer ratio
	\begin{equation}
		R_\eta(k)
		=
		\frac{\|\Pi_{\A,h}\Delta w_h\|_{L^2(\Omega)}}{\|\Delta w_h\|_{L^2(\Omega)}} ,
		\label{eq:delta_transfer_ratio_exp}
	\end{equation}
	where $\Delta w_h$ is the perturbation induced change in the intermediate correction field. The numerator is the component that enters the final weighted gradient recovery space, so $R_\eta$ measures the recoverable fraction of the perturbation correction field.
	
	Table~\ref{tab:perturbation_gain_reference} reports selected propagation gains and the $H^1$ reference response error for $N_{\rm side}=30$. For $k=4$, the gains of the different methods are close. At higher frequencies, the NDM values of $G_{\rm strip}$, $G_{\rm int}$, and $G_{\rm TV}$ generally decrease. Thus the perturbation test identifies a resolution-dependent projection effect: medium and high frequency boundary components are strongly filtered by the recovery space, while low frequency components are transferred comparably to the reference methods. For $k=16$,
	\[
	G_{\rm strip}^{\rm NDM}=1.285\times10^{-2},
	\qquad
	G_{\rm int}^{\rm NDM}=1.423\times10^{-3},
	\qquad
	G_{\rm TV}^{\rm NDM}=6.152\times10^{-1},
	\]
	and $R_\eta$ decreases from $9.580\times10^{-1}$ to $1.056\times10^{-1}$. Thus, at the selected recovery resolution, a smaller part of the intermediate correction field reaches the final weighted gradient recovery space for the tested medium and high frequency perturbations. The curl step transfers the tangential mismatch into an intermediate correction field, while the final projection passes the component representable in $G_{\A,h}$ and attenuates poorly resolved high frequency content. At $k=16$, this filtering is accompanied by the NDM value $E_{\rm ref,H^1}=9.809\times10^{-1}$, larger than those of strong pointwise enforcement and the post hoc $H^1$ optimum penalty candidate. The comparison therefore separates recoverable projected content from fidelity to a deliberately high frequency boundary response at the selected resolution.
	
	\begin{table}[!htbp]
		\centering
		\caption{Incremental boundary perturbation response for \eqref{eq:perturbation_base_problem} to \eqref{eq:boundary_perturbation_mode}, with errors relative to the harmonic reference \eqref{eq:harmonic_reference_increment} to \eqref{eq:reference_relative_error}; $N_{\rm side}=30$ and $\varepsilon=10^{-2}$.}
		\label{tab:perturbation_gain_reference}
		\scriptsize
		\renewcommand{\arraystretch}{0.78}
		\begin{widetable}
			\begin{tabular}{clccccc}
				\toprule
				$k$ & Method & $G_{\rm strip}$ & $G_{\rm int}$ & $G_{\rm TV}$ & $E_{\rm ref,H^1}$ & $R_\eta$ \\
				\midrule
				4
				& Strong pointwise & $2.809{\times}10^{-1}$ & $1.002{\times}10^{-1}$ & $2.838$ & $6.224{\times}10^{-2}$ & n.a. \\
				4
				& Penalty post hoc $H^1$ optimum & $2.771{\times}10^{-1}$ & $9.920{\times}10^{-2}$ & $2.795$ & $6.498{\times}10^{-2}$ & n.a. \\
				4
				& Nitsche trace estimate & $2.790{\times}10^{-1}$ & $1.001{\times}10^{-1}$ & $2.796$ & $1.272{\times}10^{-1}$ & n.a. \\
				4
				& NDM & $2.750{\times}10^{-1}$ & $9.905{\times}10^{-2}$ & $2.649$ & $1.903{\times}10^{-1}$ & $9.580{\times}10^{-1}$ \\
				\midrule
				8
				& Strong pointwise & $2.093{\times}10^{-1}$ & $1.643{\times}10^{-2}$ & $2.466$ & $1.354{\times}10^{-1}$ & n.a. \\
				8
				& Penalty post hoc $H^1$ optimum & $2.034{\times}10^{-1}$ & $1.602{\times}10^{-2}$ & $2.399$ & $1.397{\times}10^{-1}$ & n.a. \\
				8
				& Nitsche trace estimate & $2.020{\times}10^{-1}$ & $1.665{\times}10^{-2}$ & $2.298$ & $2.305{\times}10^{-1}$ & n.a. \\
				8
				& NDM & $1.751{\times}10^{-1}$ & $1.298{\times}10^{-2}$ & $2.112$ & $4.904{\times}10^{-1}$ & $7.734{\times}10^{-1}$ \\
				\midrule
				12
				& Strong pointwise & $1.791{\times}10^{-1}$ & $4.456{\times}10^{-3}$ & $1.836$ & $2.336{\times}10^{-1}$ & n.a. \\
				12
				& Penalty post hoc $H^1$ optimum & $1.713{\times}10^{-1}$ & $4.306{\times}10^{-3}$ & $1.756$ & $2.375{\times}10^{-1}$ & n.a. \\
				12
				& Nitsche trace estimate & $1.622{\times}10^{-1}$ & $3.239{\times}10^{-3}$ & $1.407$ & $3.435{\times}10^{-1}$ & n.a. \\
				12
				& NDM & $7.780{\times}10^{-2}$ & $3.404{\times}10^{-3}$ & $1.910$ & $8.221{\times}10^{-1}$ & $4.187{\times}10^{-1}$ \\
				\midrule
				16
				& Strong pointwise & $1.657{\times}10^{-1}$ & $6.120{\times}10^{-3}$ & $1.511$ & $3.415{\times}10^{-1}$ & n.a. \\
				16
				& Penalty post hoc $H^1$ optimum & $1.556{\times}10^{-1}$ & $5.972{\times}10^{-3}$ & $1.420$ & $3.434{\times}10^{-1}$ & n.a. \\
				16
				& Nitsche trace estimate & $1.356{\times}10^{-1}$ & $8.63{\times}10^{-4}$ & $7.392{\times}10^{-1}$ & $4.539{\times}10^{-1}$ & n.a. \\
				16
				& NDM & $1.285{\times}10^{-2}$ & $1.423{\times}10^{-3}$ & $6.152{\times}10^{-1}$ & $9.809{\times}10^{-1}$ & $1.056{\times}10^{-1}$ \\
				\bottomrule
			\end{tabular}
		\end{widetable}
	\end{table}
	
	\begin{figure}[!htbp]
		\centering
		\includegraphics[width=0.88\textwidth]{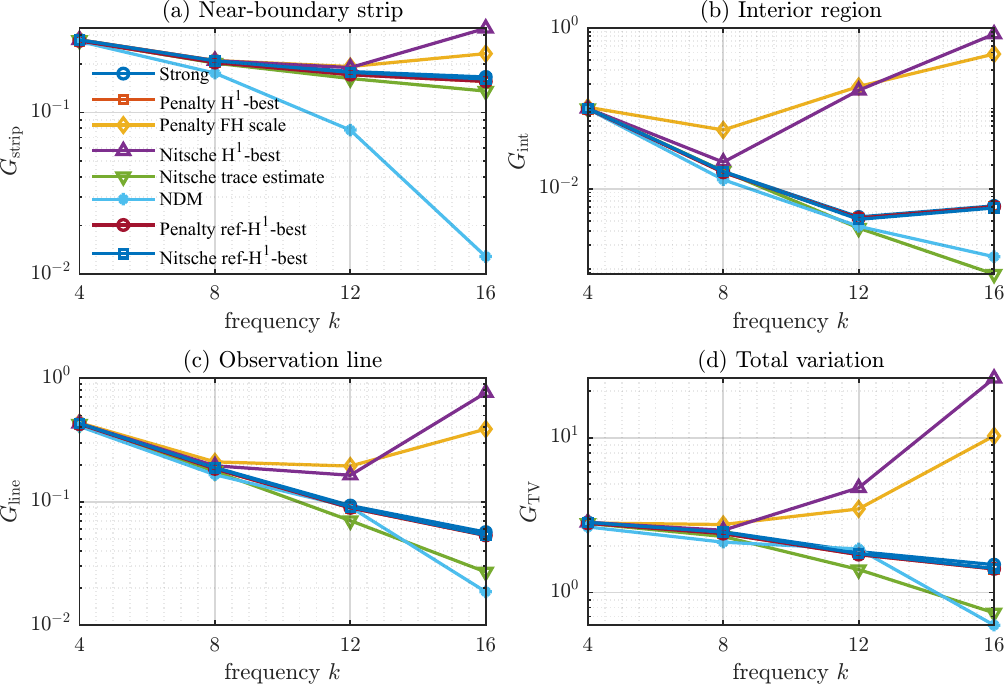}
		\caption{Frequency dependence of the incremental propagation gains $G_{\rm strip}$, $G_{\rm int}$, $G_{\rm line}$, and $G_{\rm TV}$ from \eqref{eq:gains} for the boundary perturbation problem \eqref{eq:perturbed_boundary_data} to \eqref{eq:boundary_perturbation_mode}, with $N_{\rm side}=30$. These gains are read together with the reference fidelity errors in Figure~\ref{fig:perturb_ref_error_frequency}.}
		\label{fig:perturb_gain_frequency}
	\end{figure}
	
	\begin{figure}[!htbp]
		\centering
		\includegraphics[width=0.88\textwidth]{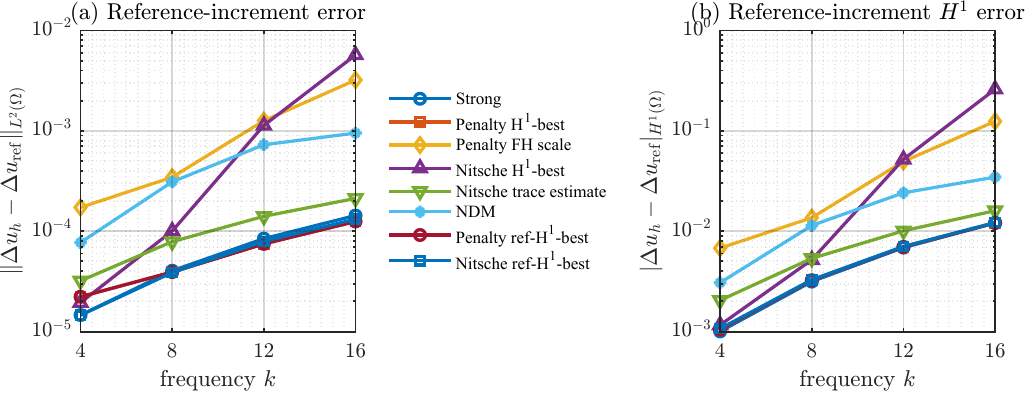}
		\caption{Relative errors of the incremental responses for the perturbation problem \eqref{eq:perturbed_boundary_data} to \eqref{eq:boundary_perturbation_mode}, measured against the harmonic reference \eqref{eq:harmonic_reference_increment} to \eqref{eq:reference_relative_error}. This separates attenuation of boundary noise from fidelity to true essential boundary perturbations.}
		\label{fig:perturb_ref_error_frequency}
	\end{figure}
	
	\FloatBarrier
	
	Figure~\ref{fig:perturb_gain_frequency} shows the frequency dependence of the propagation gains. A smaller gain measures a smaller recovered component of the boundary input; fidelity to a true perturbed problem with essential boundary data is assessed by the harmonic reference in Figure~\ref{fig:perturb_ref_error_frequency}. At $k=16$, the selected recovery resolution yields smaller recovered NDM perturbation components together with a larger harmonic reference error than strong pointwise enforcement and the post hoc $H^1$ optimum penalty candidate. The propagation gains and reference errors therefore distinguish projected attenuation of medium and high frequency boundary components from fidelity to a prescribed high frequency essential boundary response.
	
	The comparisons show a structural distinction. Penalty and Nitsche accuracy is tied to parameter windows, and the post hoc optimum rows use exact solution information unavailable when boundary parameters must be selected a priori. The Lagrange multiplier method introduces sensitivity to the constraint density and saddle point structure. NDM uses the same natural transfer mechanism across these tests, while the perturbation study distinguishes projection attenuation from fidelity to prescribed essential boundary perturbations.
	
	
	\FloatBarrier
	
	\section{Conclusion}
	\label{sec:conclusion}
	
	This paper establishes a natural decomposition method for imposing essential boundary conditions in weak form meshfree Galerkin discretizations with noninterpolatory trial spaces. The difficulty is structural: essential boundary data prescribe a continuous trace, whereas meshfree coefficients or finitely many boundary point values do not generally define the trace admissible space required by the variational problem. NDM avoids this mismatch by introducing the boundary data before discretization through a natural transfer mechanism.
	
	At the continuous level, the method identifies the missing weighted gradient component as a weighted curl range contribution in the topologically trivial single domain setting, yielding an equivalent reconstruction after the boundary mean is fixed. At the discrete level, the projected error decomposition separates the approximation defect of the final recovery space from the upstream transfer error visible to that space. The interface formulation follows the same source, curl, and recovery logic, with a conditional equivalence under the corresponding broken range and lifting assumptions.
	
	The resulting boundary treatment does not require a penalty parameter, a Nitsche stabilization constant, a multiplier space, or a boundary interpolatory modification of the meshfree trial space. The numerical results in a global MQ RBF realization demonstrate the effectiveness of the fixed transfer without boundary parameter tuning and clarify the associated conditioning, computational cost, and perturbation transfer behavior.
	
	Because the continuous transfer is formulated before a particular meshfree basis is chosen, it can be paired with locally supported RBF or RKPM spaces, provided compatible scalar and vector spaces and quadrature are available. Such localized realizations are also the natural route to reduce the large condition numbers inherited from global MQ bases. Future work will therefore focus on sparse realizations for large scale computation, effective preconditioning of the curl correction, and extensions to mixed boundary conditions, more general interface conditions, broader PDE systems, and domains with nontrivial topology.
	
	\section*{Declaration of competing interests}
	
	The authors declare that they have no known competing financial interests or personal relationships that could have appeared to influence the work reported in this paper.
	
	\section*{Data availability}
	
	Data will be made available on request.
	
	\section*{Acknowledgements}
	
	This work was partially supported by the National Natural Science Foundation of China (NSFC) under grant numbers 92370205, 12271512 and 12371377. T. Li was also partially supported by the Jiangsu Provincial Scientific Research Center of Applied Mathematics under Grant No. BK20233002. This research was funded partially by Shanghai Institute for Mathematics and Interdisciplinary Sciences under grant number SIMIS-ID-2024-LG. We thank Tianhe-2 and the Big Data Computing Center in Southeast University, China, for the use of their computing resources.
	
	\section*{Declaration of generative AI and AI-assisted technologies in the writing process}
	
	During the preparation of this work the authors used ChatGPT in order to improve the clarity, fluency, and conciseness of the language. After using this tool/service, the authors reviewed and edited the content as needed and take full responsibility for the content of the published article.
	
	\section*{CRediT authorship contribution statement}
	
	Jingkai Zhang: Software, Validation, Formal analysis, Investigation, Writing: original draft, Visualization.
	Tiexiang Li: Validation, Supervision, Writing: review and editing.
	Shuo Zhang: Conceptualization, Methodology, Supervision, Formal analysis, Validation, Writing: review and editing.

	
	\appendix
	
	\makeatletter
	\renewcommand{\@seccntformat}[1]{%
		\ifstrequal{#1}{section}%
		{Appendix~\csname the#1\endcsname.\hskip 0.5em}%
		{\csname the#1\endcsname.\hskip 0.5em}%
	}
	\@addtoreset{equation}{section}
	\@addtoreset{figure}{section}
	\@addtoreset{table}{section}
	\@ifundefined{c@theorem}{}{\@addtoreset{theorem}{section}}
	\makeatother
	
	\renewcommand{\theequation}{\thesection.\arabic{equation}}
	\renewcommand{\thefigure}{\thesection.\arabic{figure}}
	\renewcommand{\thetable}{\thesection.\arabic{table}}
	\renewcommand{\thetheorem}{\thesection.\arabic{theorem}}
	
	\section{Planar Natural Decomposition}
	\label{app:2d}
	
	For reference, we recall the planar natural decomposition that motivates the three dimensional construction. Let $\Omega\subset\mathbb R^2$ have boundary $\Gamma$ and outward normal vector $\nn=(n_1,n_2)^\top$, and define the tangent vector
	\[
	\mathbf t=(-n_2,n_1)^\top .
	\]
	
	For the Poisson problem
	\[
	-\Delta u=f\quad \text{in }\Omega,
	\qquad
	u=g\quad \text{on }\Gamma,
	\]
	the solution is reconstructed by three sequential subproblems.
	
	First, find $\widetilde u\in H^1_\Gamma(\Omega)$ such that
	\[
	(\nabla\widetilde u,\nabla v)=\langle f,v\rangle,
	\qquad
	\forall v\in H^1_\Gamma(\Omega),
	\qquad
	H^1_\Gamma(\Omega)
	=
	\left\{
	v\in H^1(\Omega):
	\frac{1}{|\Gamma|}\int_\Gamma v\,ds=0
	\right\}.
	\]
	
	Second, find a scalar potential $\varphi$ such that
	\[
	(\curl\varphi,\curl\psi)
	=
	\left\langle
	\partial_{\mathbf t}(g-\widetilde u|_\Gamma),\psi|_\Gamma
	\right\rangle_\Gamma,
	\qquad
	\forall \psi\in H^1(\Omega).
	\]
	
	Finally, find $u_c\in H^1(\Omega)$ such that
	\[
	(\nabla u_c,\nabla v)
	=
	(\nabla\widetilde u-\curl\varphi,\nabla v),
	\qquad
	\forall v\in H^1(\Omega),
	\]
	and set
	\[
	u^\star=u_c-C,
	\qquad
	C=\frac{1}{|\Gamma|}\int_\Gamma (u_c-g)\,ds .
	\]
	
	The complete equivalence theory for the two dimensional formulation is given in~\citep{YuZhang2025}.

	\section{Sequential Error Accounting Estimate}
	\label{app:discrete_stability}
	
	This appendix records the sequential error structure used to interpret the RBF realization. The estimate separates the recovery approximation error from the upstream error component that is visible to the final recovery. Mesh dependent convergence rates are governed by the approximation, quadrature, and algebraic stability estimates of the chosen RBF space and implementation.
	
	Let $V_{h,\Gamma}=V_h\cap H^1_\Gamma(\Omega)$, and let $a_{1,h}$ and $a_{3,h}$ denote the discrete bilinear forms associated with Subproblems~(S1) and~(S3), including the same quadrature, scaling, constraint treatment, and nullspace handling used in the computations. Assume that, on the corresponding mean constrained or quotient spaces, there are constants $\alpha_{j,h}>0$ and $M_{j,h}<\infty$ such that
	\begin{equation}
		\alpha_{j,h}\|v_h\|_{H^1(\Omega)}^2
		\le
		a_{j,h}(v_h,v_h),
		\qquad
		|a_{j,h}(u_h,v_h)|
		\le
		M_{j,h}\|u_h\|_{H^1(\Omega)}\|v_h\|_{H^1(\Omega)},
		\label{eq:app_scalar_stability}
	\end{equation}
	for $j=1,3$ and for all admissible discrete functions.
	
	\begin{proposition}
		\label{prop:app_coercive_steps}
		Under \eqref{eq:app_scalar_stability}, the discrete scalar solves in Subproblems~(S1) and~(S3) are stable on their constrained spaces. If the exact scalar solution of one such step is $z$ and the discrete solution is $z_h$, then
		\begin{equation}
			\|z-z_h\|_{H^1(\Omega)}
			\le
			\left(1+\frac{M_{j,h}}{\alpha_{j,h}}\right)
			\inf_{v_h\in V_h}\|z-v_h\|_{H^1(\Omega)}
			+
			\frac{1}{\alpha_{j,h}}\|\ell-\ell_h\|_{V_h'},
			\label{eq:app_cea_with_perturbation}
		\end{equation}
		where $\ell$ and $\ell_h$ denote the continuous and discrete right hand sides restricted to the discrete test space.
	\end{proposition}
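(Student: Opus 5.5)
This is a Strang-type perturbation of Céa's lemma, carried out on the constrained discrete space. Fix $j\in\{1,3\}$. Write $a_j$ for the continuous form and $a_{j,h}$ for the discrete form from \eqref{eq:app_scalar_stability}. Let $\ell$ and $\ell_h$ be the right hand sides. The exact solution $z$ satisfies $a_j(z,v)=\ell(v)$, and the discrete solution $z_h$ satisfies $a_{j,h}(z_h,v_h)=\ell_h(v_h)$ for all admissible $v_h$.

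\textbf{Step 1 (stability).} Existence and uniqueness on the mean-constrained (S1) or quotient (S3) space follows from the coercivity in \eqref{eq:app_scalar_stability} and Lax--Milgram in finite dimensions. We also get $\|z_h\|_{H^1}\le\alpha_{j,h}^{-1}\|\ell_h\|_{V_h'}$.

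\textbf{Step 2 (splitting).} Take an arbitrary admissible $v_h$ and split
\[
z-z_h=(z-v_h)+(v_h-z_h).
\]
Set $e_h:=v_h-z_h$ and test with $e_h$:
\[
\alpha_{j,h}\|e_h\|_{H^1}^2\le a_{j,h}(e_h,e_h)=a_{j,h}(v_h-z,e_h)+\bigl[a_{j,h}(z,e_h)-\ell_h(e_h)\bigr].
\]
The first term is bounded by $M_{j,h}\|z-v_h\|_{H^1}\|e_h\|_{H^1}$.

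\textbf{Step 3 (consistency term).} The bracket is the main obstacle. If $a_{j,h}$ is evaluated on the exact solution $z$, then $a_{j,h}(z,e_h)=\ell(e_h)$, by consistency of the form on the continuous solution restricted to discrete test functions. The bracket then equals $(\ell-\ell_h)(e_h)$. This is exactly how the statement interprets ``$\ell$ restricted to the discrete test space.'' Any quadrature inconsistency in the bilinear form itself is absorbed into $\ell-\ell_h$, by defining $\ell$ on $V_h$ as $a_{j,h}(z,\cdot)$. I will state this convention explicitly, since it is what makes \eqref{eq:app_cea_with_perturbation} hold without an extra form-consistency term.

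\textbf{Step 4 (conclusion).} Divide by $\|e_h\|_{H^1}$ to get
\[
\|e_h\|_{H^1}\le \frac{M_{j,h}}{\alpha_{j,h}}\|z-v_h\|_{H^1}+\frac{1}{\alpha_{j,h}}\|\ell-\ell_h\|_{V_h'}.
\]
The triangle inequality then gives
\[
\|z-z_h\|_{H^1}\le\Bigl(1+\frac{M_{j,h}}{\alpha_{j,h}}\Bigr)\|z-v_h\|_{H^1}+\frac{1}{\alpha_{j,h}}\|\ell-\ell_h\|_{V_h'}.
\]
Taking the infimum over $v_h$ yields the claim.

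\textbf{Constraints.} The infimum in \eqref{eq:app_cea_with_perturbation} runs over all of $V_h$ rather than the constrained space. To handle this:
\begin{itemize}
\item For (S3), constants lie in $V_h$ (polynomial augmentation), so the quotient is harmless.
\item For (S1), any $v_h\in V_h$ can be shifted by its boundary mean. Since $z$ has zero boundary mean, the trace inequality gives $|\bar v_{h,\Gamma}|\lesssim\|z-v_h\|_{H^1}$.
\end{itemize}
This introduces at most a harmless constant, which I would absorb into the $H^1$-norm convention or the definition of the constrained space.
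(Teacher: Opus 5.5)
Your proposal is correct and takes the same route as the paper. The paper's proof is a one-line appeal to the standard Strang form of C\'ea's argument, with constants removed by the boundary-mean condition in (S1) and the quotient in (S3), and you have written that argument out in detail. The conventions you rely on are exactly what the paper leaves implicit: continuity of $a_{j,h}$ with constant $M_{j,h}$ when one argument is $z-v_h$ (used tacitly in Step~2, although \eqref{eq:app_scalar_stability} is stated only for discrete functions), and absorption of any form inconsistency into $\ell-\ell_h$. The extra factor you pick up for the unconstrained infimum in (S1) does not matter, since the paper later uses only the infimum over $V_{h,\Gamma}$ in $I_{1,h}$.
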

	
	\begin{proof}
		The estimate is the standard Strang form of the C\'ea argument for a symmetric coercive problem. Constants are removed by the boundary mean condition or by the quotient space used in the recovery step. Once this has been done, \eqref{eq:app_scalar_stability} gives the stated bound.
	\end{proof}
	
	We next state the sequential projection identities for the ideal exactly integrated Galerkin structure, and then record the additional perturbation terms that enter the implemented computation. Define the discrete correction and recovery spaces
	\[
	\mathcal R_h
	:=
	\left\{\A^{-1}\nabla\times\boldsymbol\psi_h:
	\boldsymbol\psi_h\in\mathbf V_h\right\},
	\qquad
	G_h
	:=
	\left\{\A\nabla v_h:v_h\in V_h\right\},
	\]
	and let $P_{\mathcal R_h}$ and $P_{G_h}$ be the $L^2(\Omega)^3$ orthogonal projections onto these spaces. Let
	\[
	\mathbf r
	:=
	\mathcal R_\A\boldsymbol\phi
	=
	\A^{-1}\nabla\times\boldsymbol\phi
	=
	\A\nabla\widetilde u-\A\nabla u,
	\qquad
	\delta_h:=\A\nabla(\widetilde u-\widetilde u_h).
	\]
	
	\begin{proposition}
		\label{prop:app_sequential_accounting}
		Work in the topologically trivial single domain setting of Assumption~\ref{ass:weighted_helmholtz_closure}. By Proposition~\ref{prop:checkable_hodge_range}, the continuous correction satisfies
		\[
		\A\nabla u=\A\nabla\widetilde u-\mathbf r .
		\]
		
		Assume, for the moment, exact integration and exact linear solves. Let $\widetilde u_h$ be the Galerkin solution of Subproblem~(S1), let $\mathbf r_h$ be the correction field produced by Subproblem~(S2) using the mismatch $g-T\widetilde u_h$, and let $u_h$ be the final recovered solution. Then
		\begin{equation}
			\mathbf r_h
			=
			P_{\mathcal R_h}(\mathbf r-\delta_h),
			\label{eq:app_rh_projection_identity}
		\end{equation}
		and
		\begin{equation}
			\A\nabla u_h
			=
			P_{G_h}(\A\nabla\widetilde u_h-\mathbf r_h).
			\label{eq:app_uh_projection_identity}
		\end{equation}
		
		Consequently,
		\begin{equation}
			\begin{aligned}
				\|\A\nabla(u-u_h)\|_{L^2(\Omega)}
				&\le
				\|(I-P_{G_h})\A\nabla u\|_{L^2(\Omega)} \\
				&\quad+
				\|P_{G_h}(I-P_{\mathcal R_h})(\mathbf r-\delta_h)\|_{L^2(\Omega)} .
			\end{aligned}
			\label{eq:app_sequential_exact_bound}
		\end{equation}
		
		In particular,
		\begin{equation}
			\|\A\nabla(u-u_h)\|_{L^2(\Omega)}
			\le
			I_{3,h}+I_{2,h}+E_{1,h},
			\label{eq:app_three_defect_bound}
		\end{equation}
		where
		\[
		I_{3,h}:=
		\inf_{v_h\in V_h}
		\|\A\nabla(u-v_h)\|_{L^2(\Omega)},
		\]
		\[
		I_{2,h}:=
		\inf_{\boldsymbol\psi_h\in\mathbf V_h}
		\|\mathbf r-\A^{-1}\nabla\times\boldsymbol\psi_h\|_{L^2(\Omega)},
		\]
		and
		\[
		E_{1,h}:=
		\|\A\nabla(\widetilde u-\widetilde u_h)\|_{L^2(\Omega)}.
		\]
		
		If Subproblem~(S1) satisfies the Strang type estimate
		\begin{equation}
			E_{1,h}
			\le
			C_{1,h}
			\inf_{\xi_h\in V_{h,\Gamma}}
			\|\A\nabla(\widetilde u-\xi_h)\|_{L^2(\Omega)}
			+
			\Delta_{1,h},
			\label{eq:app_s1_strang_energy}
		\end{equation}
		then
		\begin{equation}
			\|\A\nabla(u-u_h)\|_{L^2(\Omega)}
			\le
			I_{3,h}+I_{2,h}+C_{1,h}I_{1,h}+\Delta_{1,h},
			\label{eq:app_accounting_ideal}
		\end{equation}
		where
		\[
		I_{1,h}:=
		\inf_{\xi_h\in V_{h,\Gamma}}
		\|\A\nabla(\widetilde u-\xi_h)\|_{L^2(\Omega)}.
		\]
		
		With quadrature, boundary data approximation, curl right hand side perturbations, recovery perturbations, and algebraic residuals, the practical accounting form is
		\begin{equation}
			\|\A\nabla(u-u_h)\|_{L^2(\Omega)}
			\le
			I_{3,h}+I_{2,h}+C_{1,h}I_{1,h}
			+
			\Delta_{1,h}
			+
			\Delta^R_{2,h}
			+
			\Delta_{3,h}
			+
			\varepsilon_{\rm alg,h}.
			\label{eq:app_accounting_practical}
		\end{equation}
		
		Here $\Delta_{1,h}$ collects the quadrature and right hand side perturbations in Subproblem~(S1), $\Delta^R_{2,h}$ denotes the induced correction field perturbation from Subproblem~(S2), including boundary data and curl right hand side approximation errors, $\Delta_{3,h}$ denotes the perturbation in the final recovery projection, and $\varepsilon_{\rm alg,h}$ collects the residual effects of inexact algebraic solves.
	\end{proposition}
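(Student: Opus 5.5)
The plan is to verify the two projection identities directly from the Galerkin equations and then chain them with the triangle inequality.

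\textbf{Identity for $\mathbf r_h$.} For the discrete curl step, the right-hand side is $\mathcal F_{\rho_h}$ with $\rho_h=g-T\widetilde u_h$. By Proposition~\ref{prop:curl_rhs_compatibility} we may use any lifting. The convenient choice is $E\rho_h=u-\widetilde u_h$, which is admissible because $Tu=g$. Then, for every $\boldsymbol\psi_h\in\mathbf V_h$,
\[
\mathcal F_{\rho_h}(\boldsymbol\psi_h)=(\nabla(\widetilde u_h-u),\nabla\times\boldsymbol\psi_h)_\Omega
=(\A\nabla\widetilde u_h-\A\nabla u,\A^{-1}\nabla\times\boldsymbol\psi_h)_\Omega ,
\]
where the second equality uses $\A=\A^\top$. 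Since $\A\nabla\widetilde u-\A\nabla u=\mathbf r$, the first argument equals $\mathbf r-\delta_h$. The discrete equation (S2) therefore states that $\mathbf r_h\in\mathcal R_h$ satisfies
\[
(\mathbf r_h-(\mathbf r-\delta_h),\mathbf s_h)_\Omega=0\qquad\text{for all }\mathbf s_h\in\mathcal R_h .
\]
This is exactly \eqref{eq:app_rh_projection_identity}. Uniqueness of the induced field, even though the potential is only unique modulo the kernel, follows as in Proposition~\ref{prop:3d_equivalence}.

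\textbf{Identity for $\A\nabla u_h$.} Equation (S3) is the $L^2$ orthogonality of $\A\nabla u_{c,h}-(\A\nabla\widetilde u_h-\mathbf r_h)$ to $G_h$. This gives \eqref{eq:app_uh_projection_identity}; the mean closure does not change the gradient.

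\textbf{Error split.} We write $\A\nabla u_h=P_{G_h}w_h$ with $w_h=\A\nabla\widetilde u_h-\mathbf r_h$. The key algebraic observation is
\[
w_h=\A\nabla\widetilde u-\delta_h-P_{\mathcal R_h}(\mathbf r-\delta_h)=\A\nabla u+(I-P_{\mathcal R_h})(\mathbf r-\delta_h).
\]
Hence
\[
\A\nabla u-P_{G_h}w_h=(I-P_{G_h})\A\nabla u-P_{G_h}(I-P_{\mathcal R_h})(\mathbf r-\delta_h),
\]
and the triangle inequality gives \eqref{eq:app_sequential_exact_bound}.

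\textbf{Three-defect bound \eqref{eq:app_three_defect_bound}.} We bound the two terms separately.
\begin{itemize}
\item The first term equals $I_{3,h}$ by best approximation in $G_h$.
\item For the second term, $\|P_{G_h}\|\le1$ and the triangle inequality give
\[
\|P_{G_h}(I-P_{\mathcal R_h})(\mathbf r-\delta_h)\|\le\|(I-P_{\mathcal R_h})\mathbf r\|+\|(I-P_{\mathcal R_h})\delta_h\|\le I_{2,h}+E_{1,h}.
\]
Here $\|(I-P_{\mathcal R_h})\mathbf r\|=I_{2,h}$, and $\|I-P_{\mathcal R_h}\|\le1$ with $\|\delta_h\|=E_{1,h}$.
\end{itemize}
Substituting \eqref{eq:app_s1_strang_energy} for $E_{1,h}$ then yields \eqref{eq:app_accounting_ideal}.

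\textbf{Practical form.} For \eqref{eq:app_accounting_practical}, each implemented step is the ideal step plus a perturbation. The perturbations are:
\begin{itemize}
\item quadrature and data errors in (S1), absorbed into $\Delta_{1,h}$;
\item the difference between the computed and ideal induced field in (S2), measured in $L^2$ as $\Delta^R_{2,h}$;
\item the difference between the computed recovery gradient and $P_{G_h}$ of its computed input, giving $\Delta_{3,h}$;
\item algebraic residuals, giving $\varepsilon_{\rm alg,h}$.
\end{itemize}
Each perturbation propagates through later steps only via the contractive projections $P_{\mathcal R_h}$ and $P_{G_h}$, so it adds linearly with constant one.

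\textbf{Main obstacle.} The main difficulty is the first identity: the discrete right-hand side must be represented via the lifting $u-\widetilde u_h$ and not via a surface formula. Without this lifting-independence from Proposition~\ref{prop:curl_rhs_compatibility}, $\mathbf r-\delta_h$ would not appear as the projected quantity. The practical form is essentially a definitional bookkeeping of these perturbations.
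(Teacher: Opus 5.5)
Your proposal is correct and follows the paper's proof step for step: the projection identity for $\mathbf r_h$ from the lifting-independence of $\mathcal F_\rho$, the identity $\A\nabla\widetilde u_h-\mathbf r_h=\A\nabla u+(I-P_{\mathcal R_h})(\mathbf r-\delta_h)$, the triangle inequality with contractive projections, and then insertion of the Strang bound. The only cosmetic difference is that you take the single lifting $u-\widetilde u_h$ of $g-T\widetilde u_h$. The paper instead writes $g-T\widetilde u_h=(g-T\widetilde u)+T(\widetilde u-\widetilde u_h)$ and combines the continuous correction identity with the lifting $\widetilde u-\widetilde u_h$, which is the same computation.
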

	
	\begin{proof}
		For any $\boldsymbol\eta_h=\A^{-1}\nabla\times\boldsymbol\psi_h\in\mathcal R_h$, the continuous correction satisfies
		\[
		(\mathbf r,\boldsymbol\eta_h)_\Omega
		=
		\mathcal F_{g-T\widetilde u}(\boldsymbol\psi_h).
		\]
		
		The discrete correction uses the mismatch $g-T\widetilde u_h$. Since
		\[
		(g-T\widetilde u_h)-(g-T\widetilde u)
		=
		T(\widetilde u-\widetilde u_h),
		\]
		the lifting definition of the right hand side gives
		\[
		\mathcal F_{g-T\widetilde u_h}(\boldsymbol\psi_h)
		=
		\mathcal F_{g-T\widetilde u}(\boldsymbol\psi_h)
		-
		(\A\nabla(\widetilde u-\widetilde u_h),\boldsymbol\eta_h)_\Omega.
		\]
		
		Therefore
		\[
		(\mathbf r_h,\boldsymbol\eta_h)_\Omega
		=
		(\mathbf r-\delta_h,\boldsymbol\eta_h)_\Omega,
		\qquad
		\forall\boldsymbol\eta_h\in\mathcal R_h,
		\]
		which is precisely \eqref{eq:app_rh_projection_identity}. The final scalar recovery gives \eqref{eq:app_uh_projection_identity}. Since
		\[
		\A\nabla u
		=
		\A\nabla\widetilde u-\mathbf r,
		\]
		we have
		\[
		\A\nabla\widetilde u_h-\mathbf r_h
		=
		\A\nabla u+(I-P_{\mathcal R_h})(\mathbf r-\delta_h).
		\]
		
		Consequently,
		\[
		\A\nabla u-\A\nabla u_h
		=
		(I-P_{G_h})\A\nabla u
		-
		P_{G_h}(I-P_{\mathcal R_h})(\mathbf r-\delta_h),
		\]
		and \eqref{eq:app_sequential_exact_bound} follows by the triangle inequality. Since orthogonal projections are contractions,
		\[
		\|P_{G_h}(I-P_{\mathcal R_h})(\mathbf r-\delta_h)\|_{L^2(\Omega)}
		\le
		\|(I-P_{\mathcal R_h})\mathbf r\|_{L^2(\Omega)}
		+
		\|\delta_h\|_{L^2(\Omega)} .
		\]
		
		This yields \eqref{eq:app_three_defect_bound}. Inserting \eqref{eq:app_s1_strang_energy} gives \eqref{eq:app_accounting_ideal}. The perturbation terms in \eqref{eq:app_accounting_practical} are obtained by applying the same stability argument to the quadrature, right hand side, boundary data, recovery, and algebraic perturbations in Subproblems~(S1), (S2), and~(S3).
	\end{proof}
	
	Proposition~\ref{prop:app_sequential_accounting} explains the diagnostics used in Section~\ref{sec:exp}. The sharper diagnostic in the numerical section is the projected split in \eqref{eq:numerical_error_split}: the final weighted gradient error is interpreted through the recovery approximation component and the upstream component visible to the recovery space. The coarser bound \eqref{eq:app_three_defect_bound} supplies the sequential error accounting, while implementation specific convergence rates are governed by approximation estimates for the selected RBF space, mesh dependent stability bounds, quadrature estimates, and algebraic residual bounds.

	\section{Computational Cost of the Dense Global RBF Realization}
	\label{app:validation_scale_cost}
	
	This appendix records the algebraic sizes, conditioning estimates, and solver choices used in the reported three dimensional tests. The implementation is based on dense global MQ RBF spaces for Subproblems 1 and 3, together with a matrix-free MINRES QLP solve for the semidefinite curl curl system in Subproblem 2.
	
	\par\smallskip
	\phantomsection
	\refstepcounter{table}
	\label{tab:cost_3d_examples}
	\noindent\textbf{Table~\thetable}\par
	\noindent Computational cost at the largest reported node set for the three dimensional tests \eqref{eq:3d_model_problem}, \eqref{eq:case5_piecewise_solution} to \eqref{eq:case5_jump_data}, and \eqref{eq:case6_piecewise_solution} to \eqref{eq:case6_kappa2}.
	\par\smallskip
	\begin{center}
		\begingroup
		\scriptsize
		\setlength{\tabcolsep}{2.7pt}
		\renewcommand{\arraystretch}{1.08}
		\resizebox{\textwidth}{!}{%
			\begin{tabular}{c l c c c c c c c c}
				\toprule
				Case
				& Type
				& $N_{\rm side}$
				& DOF $(S1/S2/S3)$
				& Linear systems
				& $\mathrm{cond}_{\max}^{\mathrm{est}}$
				& CPU time (s)
				& Peak memory (GB)
				& Solver $(S1/S2/S3)$
				& S2 iter. \\
				\midrule
				
				1
				& Poisson
				& 21
				& $9266/27794/9280$
				& $3$
				& $5.915{\times}10^{17}$
				& $4825.37$
				& $9.06$
				& direct / MINRES QLP / direct
				& $1297$ \\
				
				2
				& Nonpolynomial Poisson
				& 21
				& $9266/27794/9280$
				& $3$
				& $5.915{\times}10^{17}$
				& $4866.37$
				& $9.06$
				& direct / MINRES QLP / direct
				& $1304$ \\
				
				3
				& Smooth variable coefficient
				& 21
				& $9266/27794/9280$
				& $3$
				& $8.262{\times}10^{17}$
				& $4373.56$
				& $9.05$
				& direct / MINRES QLP / direct
				& $934$ \\
				
				4
				& Nonsmooth variable coefficient
				& 21
				& $9266/27794/9280$
				& $3$
				& $9.118{\times}10^{17}$
				& $3970.77$
				& $3.99$
				& direct / MINRES QLP / direct
				& $447$ \\
				
				5
				& Flat interface jump
				& 21
				& $9266/27809/9740$
				& $4$
				& $4.895{\times}10^{21}$
				& $2612.13$
				& $5.38$
				& direct / MINRES QLP / direct$\times 2$
				& $899$ \\
				
				6
				& Internal interface with nonconstant $\kappa_1$
				& 19
				& $6870/20588/7770$
				& $4$
				& $3.188{\times}10^{21}$
				& $2080.72$
				& $0.38$
				& direct / MINRES QLP / direct$\times 2$
				& $686$ \\
				
				\bottomrule
			\end{tabular}%
		}
		\endgroup
	\end{center}
	\par\smallskip
	
	\medskip
	Table~\ref{tab:cost_3d_examples} reports the cost of one sequential NDM solve at the largest node set used for each three dimensional test. The CPU time and peak memory exclude auxiliary diagnostics such as $\Eapp$, $\Eproj$, $T_{123}$, and $T_{23}$. Subproblems 1 and 3 are solved by direct linear algebra. Subproblem 2 is solved by matrix-free MINRES QLP, and the reported Krylov iteration count therefore refers only to this curl correction solve. For interface tests, the recovery step is solved separately on the two subdomains, denoted by direct$\times 2$.
	
	The condition estimate column is defined by
	\[
	\mathrm{cond}_{\max}^{\mathrm{est}}
	=
	\max\{
	\mathrm{cond}_{S1}^{\mathrm{rcond}},
	\mathrm{cond}_{S2}^{\mathrm{pc}},
	\mathrm{cond}_{S3}^{\mathrm{rcond}}
	\}.
	\]
	
	Here $\mathrm{cond}_{S1}^{\mathrm{rcond}}$ and $\mathrm{cond}_{S3}^{\mathrm{rcond}}$ are reciprocal condition estimates for the direct systems after the same basis scaling, constraint treatment, and nullspace handling used in the computation. The quantity $\mathrm{cond}_{S2}^{\mathrm{pc}}$ is the MINRES QLP estimate for the preconditioned matrix-free curl curl operator. For interface tests with piecewise recovery, $\mathrm{cond}_{S3}^{\mathrm{rcond}}$ is the maximum over the two subdomain recovery systems.
	
	The first five rows use cached double precision derivative blocks in the matrix-free Subproblem 2 operator. The internal cube interface test in Case 6 uses a different storage strategy: the derivative blocks are handled in batches rather than stored as one full cached set. This batching reduces peak memory substantially, as reflected by the last row of Table~\ref{tab:cost_3d_examples}. The reported CPU time includes the cost of this storage strategy. Since Case 6 also uses a smaller stable node set, $N_{\rm side}=19$, its memory and timing are interpreted together with both the algebraic size and the batching mode.
	
	The table records the cost of the dense global RBF implementation. Single domain tests require three linear systems, while interface tests require four because the final recovery is carried out separately on the two subdomains. This cost is distinct from the cost of boundary parameter selection in penalty or Nitsche methods; the comparison in Section~\ref{subsec:ebc_mechanism_tests} treats these two issues separately.

	\bibliographystyle{unsrtnat}
	\bibliography{references}
	
\end{document}